\patchcmd{\subsubsection}{\itshape}{\bfseries\itshape}{}{}
\newtheorem{theorem}{Theorem}[section]
\newtheorem{lemma}[theorem]{Lemma}
\newtheorem{proposition}[theorem]{Proposition}
\newtheorem{corollary}[theorem]{Corollary}
\newtheorem*{theoremA}{Theorem A}
\newtheorem*{theoremB}{Theorem B}
\theoremstyle{remark}
\newtheorem{remark}[theorem]{Remark}
\theoremstyle{remark}
\newcolumntype{M}[1]{>{\centering\arraybackslash}m{#1}}
\newcolumntype{N}{@{}m{0pt}@{}}
\newcommand{\C}{\ensuremath{\mathbb{C}}}
\newcommand{\R}{\ensuremath{\mathbb{R}}}
\renewcommand{\H}{\ensuremath{\mathbb{H}}}
\renewcommand{\O}{\ensuremath{\mathbb{O}}}
\newcommand{\g}[1]{\ensuremath{\mathfrak{#1}}}
\newcommand{\s}[1]{\ensuremath{\mathsf{#1}}}
\DeclareMathOperator{\tr}{tr}
\DeclareMathOperator{\proj}{proj}
\DeclareMathOperator{\Isom}{Isom}
\DeclareMathOperator{\Id}{Id}
\DeclareMathOperator{\Ad}{Ad}
\DeclareMathOperator{\Aut}{Aut}
\DeclareMathOperator{\ad}{ad}
\DeclareMathOperator{\Ker}{Ker}
\DeclareMathOperator{\spann}{span}
\DeclareMathOperator{\rank}{rank}
\begin{document}
	\title[Totally geodesic submanifolds in products of rank one symmetric spaces]{Totally geodesic submanifolds in\\ products of rank one symmetric spaces}

\author[A. Rodr\'{\i}guez-V\'{a}zquez]{Alberto Rodr\'{\i}guez-V\'{a}zquez}
\address{KU Leuven, Department of Mathematics, Celestijnenlaan 200B, Leuven, Belgium}
\email{alberto.rodriguezvazquez@kuleuven.be}
\thanks{The author has been supported by the projects PID2022-138988NB-I00 funded by MICIU/AEI/10.13039/501100011033 and by ERDF (European Union); and ED431F 2020/04 (Xunta de Galicia, Spain), the Methusalem grant METH/15/026 of the Flemish Government (Belgium), and the FWO Postdoctoral grant with project number 1262324N}

\subjclass[2010]{Primary 53C35, Secondary 53C55, 53C40, 53C42}


\begin{abstract}
In this article, we classify totally geodesic submanifolds in arbitrary products of rank one symmetric spaces. Furthermore, we give infinitely many examples of irreducible totally geodesic submanifolds in Hermitian symmetric spaces with non-trivial constant K\"ahler angle, i.e.\  they are neither complex nor totally real.
\end{abstract}

\keywords{Totally geodesic, rank one, Hermitian symmetric space, K\"ahler angle.}

\maketitle
\section{Introduction}
\label{sect:intro}
The investigation of totally geodesic submanifolds in symmetric spaces has been a relevant and significant topic of research in submanifold geometry in the last decades. The classification of totally geodesic submanifolds in symmetric spaces was started in 1963 by Wolf \cite{wolfrankone}, who classified these objects in symmetric spaces of rank one.

Throughout this article, we will extend Wolf's result to products of rank one symmetric spaces.
In the non-compact setting these are products of hyperbolic spaces $\mathbb{F}\s H^n$, where $\mathbb{F}\in\{\R,\C,\H,\O\}$.

In this article, we  introduce some slight modification of Young tableaux that we will call adapted Young tableaux  (see \S\ref{sect:reduciblerank2} for the definition), which will be useful to classify totally geodesic submanifolds in arbitrary products of symmetric spaces of rank one and to determine their isometry type via Corollary~\ref{cor:sympolynomial}. Young tableaux have been used to classify irreducible representations of the symmetric group and they provide an effective way to gain understanding of a given irreducible representation. In a similar vein, we will exhibit that Young tableaux can also be useful to understand totally geodesic submanifolds in products of symmetric spaces of rank one. To illustrate this,  let us consider 
 \[M=\C \s H^2\times \C \s  H^2 \times \H \s H^3 \times \O  \s H^2 \times \R \s H^4.\]
  Here, the metric of $\mathbb{F}\s H^n$ is such that its sectional curvature is equal to $-1$, when $\mathbb{F}=\R$, and pinched between $-1$ and $-1/4$, in the other cases. Let $c$ be a positive number. If a symmetric space of rank one has constant sectional curvature equal to $-c$, we  denote it by $\R \s H^n(c)$; and by $\mathbb{F}\s H^n(c)$, $\mathbb{F}\in\{\C,\H,\O  \}$, when its sectional curvature lies in $[-c,-c/4]$.

\begin{figure}
	\label{fig:firstex}
 	\renewcommand{\arraystretch}{1.3}
 	\begin{tabular}{|l|ll}
 		\cline{1-2}
 		$\mathbb{R}\s H^2\subset\mathbb{C} \s H^2$ & \multicolumn{1}{l|}{$\mathbb{R}\s H^2(\frac{1}{2})\subset\mathbb{C} \s H^2$} & $\mathbb{R}\s H^2(\frac{1}{5})$        \\ \cline{1-2}
 		$\mathbb{H}\s H^2\subset\mathbb{H}\s H^3$               & \multicolumn{1}{l|}{$\mathbb{H}\s H^2\subset\O \s H^2$}         & $\mathbb{H}\s H^2(\frac{1}{2})$ \\ \cline{1-2}
 		$\mathbb{R} \s H^4\subset\mathbb{R}\s H^4$              & $\mathbb{R} \s H^4$                                          &                                     \\ \cline{1-1}
 	\end{tabular}

 \caption{An example of a Young tableau $T$ adapted to the product of rank one symmetric spaces  $M=\C \s H^2\times\C \s H^2\times \H \s H^3\times \O \s H^2\times \R \s H^4$.}
\end{figure}

 In Figure~\ref{fig:firstex} we show an example of a Young tableau $T$ adapted to $M$. In each block of $T$ there is an embedding of a totally geodesic submanifold of a factor of $M$. We can attach to $T$ a semisimple totally geodesic submanifold $\Sigma_T$ of $M$. In this example, the totally geodesic submanifold $\Sigma_T$ is isometric to $\R \s H^2(\frac{1}{5})\times \H \s H^2(\frac{1}{2})\times \R\s H^4$. Each row of $T$ describes a factor of $\Sigma_T$, thus the number of rows of $T$ is equal to the number of factors of $\Sigma_T$. The number of blocks in the $i$-th row indicates the number of factors of $M$ in which the $i$-th factor of $\Sigma_T$ lies ``diagonally'' (see Subsection~\ref{sect:preliminaries2}). Furthermore, the isometry type of each factor of $\Sigma_T$ is computed using the formula involving elementary symmetric polynomials appearing in Corollary~\ref{cor:sympolynomial}.

In this article, we prove a result (Proposition \ref{prop:youngtableaux}) which gives a  correspondence between these  adapted Young tableaux and semisimple totally geodesic submanifolds in products of rank one symmetric spaces. This will lead us to the following theorem, which via duality gives a classification of totally geodesic submanifolds in arbitrary products of symmetric spaces of rank one.
\begin{theoremA}
Let $M= M_1\times\cdots\times M_r$, where $M_i$ is a symmetric space of non-compact type and rank one for each $i\in\{1,\ldots, r\}$.
Then, a submanifold $\Sigma$ of $M$ is totally geodesic if and only if $\Sigma=\Sigma_0\times \Sigma_T$,  where $\Sigma_T$ is a semisimple totally geodesic submanifold corresponding to a Young tableau $T$ adapted to $M_{\sigma(1)}\times \cdots\times M_{\sigma(k)}$, $\Sigma_0$ is a flat totally geodesic submanifold of $M_{\sigma({k+1})}\times \cdots \times M_{\sigma(r)}$,  $\sigma$ is any permutation of $\{1,\ldots,r\}$, and $k\in\{1,\ldots,r\}$. 
\end{theoremA}

In the following, we give a brief overview of some known results about totally geodesic submanifolds of symmetric spaces. In 1977, Chen and Nagano \cite{chen1,chen2} gave a classification of totally geodesic submanifolds in irreducible symmetric spaces of rank two. In this classification there were some examples missing that were found by Klein in a series of papers \cite{kleindga,kleintams,kleinosaka}. 
Moreover,  Berndt, Olmos and Rodr\'iguez \cite{BO1,BO2,BO3,BO4,BO5} computed the index of irreducible symmetric spaces. This is the minimal codimension of a proper totally geodesic submanifold of a symmetric space. In particular, they proved that in an irreducible symmetric space $M\neq\mathsf{G}^2_2/\mathsf{SO}_4,\mathsf{G}_2/\mathsf{SO}_4$ the index is realized by a reflective totally geodesic submanifold. In view of the known results, Theorem A provides the first classification of (not necessarily maximal) totally geodesic submanifolds in some symmetric space of rank higher than two. However, Remark~\ref{remark:bigrk} points out that we cannot expect a result similar to Theorem~A  for products of irreducible spaces of rank greater than one.

A special class of symmetric spaces where totally geodesic submanifolds can be studied is that of  Hermitian symmetric spaces. On the one hand, we can use the notion of K\"ahler angle \cite{damekricci,mathz} to measure how a submanifold fails to be complex in a Hermitian symmetric space; see Section~\ref{sect:slant} for the definition of K\"ahler angle. For example, a totally geodesic submanifold is complex or totally real if and only if it has constant K\"ahler angle equal to $0$ or to $\pi/2$, respectively. On the other hand, complex totally geodesic submanifolds in Hermitian symmetric spaces were classified by Ihara in \cite{ihara}, and real forms, which constitute a particular type of totally real and totally geodesic submanifolds, were studied and  classified by Jaffe \cite{jaffee1,jaffee2}  and Leung \cite{leung,leung2}.

While in complex projective spaces (the Hermitian symmetric spaces of compact type and rank one) a totally geodesic submanifold is either complex or totally real, in the rank two case the situation is more involved. Klein found two examples of irreducible totally geodesic submanifolds which are neither complex nor totally real, one example in the complex quadric and another one in the complex 2-plane Grassmannian. These  examples have   non-trivial constant K\"ahler angle, i.e.\ they have constant K\"ahler angle different from $0$ and $\pi/2$, and they have been the only known totally geodesic submanifolds with non-trivial constant K\"ahler angle up to the present. In particular, it can be computed that these two examples have constant K\"ahler angle equal to $\arccos(1/5)$. 

In this article, we will give a method to construct infinitely many examples of irreducible totally geodesic submanifolds which have non-trivial constant K\"ahler angle in irreducible Hermitian symmetric spaces of higher rank. This method will rely on the construction of certain totally geodesic submanifolds contained in a product of Hermitian symmetric spaces. Clearly, $\C^n$, $n\ge 2$, is a Hermitian symmetric space where every $\varphi\in[0,\pi/2]$ can be realized as the constant K\"ahler angle of some totally geodesic submanifold. Therefore, we will exclude from our study flat Hermitian symmetric spaces. Let $\mathcal{J}_r$ be the set of K\"ahler angles of totally geodesic submanifolds in irreducible non-flat Hermitian symmetric spaces of rank $r$, and $\mathcal{J}=\bigcup_{r\ge 1} \mathcal{J}_r$ the set of K\"ahler angles of  totally geodesic submanifolds in non-flat irreducible Hermitian symmetric spaces. As a consequence of the classification results in rank one and two, we have 
\[ \mathcal{J}_1=\{0,\pi/2\}, \quad \mathcal{J}_2=\{0,\arccos(1/5),\pi/2\}.   \]
Consequently, an interesting problem is to compute $\mathcal{J}$ or $\mathcal{J}_r$.
In this article, we will prove the following result.
\begin{theoremB}
	\label{th:cqa}
	Let  $\mathcal{J}$ be the set of  K\"ahler angles of totally geodesic submanifolds in non-flat irreducible  Hermitian symmetric spaces. Then  $\mathcal{J}$ contains a dense subset of $[0,\pi/2]$.
\end{theoremB}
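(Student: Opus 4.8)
The plan is to use the polydisk theorem to transplant into genuinely \emph{irreducible} Hermitian symmetric spaces the totally geodesic submanifolds of products of rank one spaces supplied by Theorem~A, and then to tune a discrete parameter --- one that becomes asymptotically dense as the rank grows --- so as to realise a dense set of K\"ahler angles. For each integer $r\ge1$ I would fix an irreducible Hermitian symmetric space $N_r$ of non-compact type and rank $r$; such spaces exist for every $r$ (for example the rank $r$ Siegel upper half-space). By the polydisk theorem, $N_r$ contains a totally geodesic holomorphic polydisk $P_r$ which, as a Hermitian symmetric space, is a product $\C\s{H}^1\times\dots\times\C\s{H}^1$ of $r$ copies of the complex hyperbolic line, all of the same sectional curvature $-c<0$ (the factors correspond to a maximal set of strongly orthogonal roots, which in the Hermitian case all have the same length, so the factors are mutually isometric). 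Moreover the complex structure of $N_r$ restricts on $P_r$ to the product complex structure $\tilde J=(J_1,\dots,J_r)$; hence every totally geodesic submanifold of $P_r$ is a totally geodesic submanifold of $N_r$ with the same K\"ahler angle.

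Inside $P_r$ I would build the relevant family using Theorem~A. For a sign vector $\epsilon=(\epsilon_1,\dots,\epsilon_r)\in\{\pm1\}^r$, let $\iota_\epsilon=(\psi_1,\dots,\psi_r)\colon\C\s{H}^1(-c/r)\to P_r$, where each $\psi_i$ is a homothety of dilation factor $1/\sqrt{r}$ onto the $i$-th factor, chosen holomorphic if $\epsilon_i=1$ and anti-holomorphic if $\epsilon_i=-1$. Then $\iota_\epsilon$ is an isometric immersion, and because all the $\psi_i$ share the same dilation factor it carries geodesics of $\C\s{H}^1(-c/r)$ to geodesics of $P_r$; hence its image $\Sigma_\epsilon$ is totally geodesic in $P_r$ --- it is one of the semisimple totally geodesic submanifolds of a product of rank one spaces appearing in Theorem~A. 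In particular $\Sigma_\epsilon$ is isometric to $\C\s{H}^1$, hence irreducible and non-flat.

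Now I would compute the K\"ahler angle. Fix a point of $\Sigma_\epsilon$ and a unit vector $u$ of the tangent space of $\C\s{H}^1(-c/r)$, together with $Ju$, so that $a:=d\iota_\epsilon(u)$ and $b:=d\iota_\epsilon(Ju)$ form an orthonormal basis of $T\Sigma_\epsilon$. Since $\lvert d\psi_i(u)\rvert^2=1/r$ and $d\psi_i(Ju)=\epsilon_i\,J_i\,d\psi_i(u)$, one gets
\[
 \langle\tilde J a,\,b\rangle=\sum_{i=1}^{r}\bigl\langle J_i\,d\psi_i(u),\ \epsilon_i\,J_i\,d\psi_i(u)\bigr\rangle=\frac1r\sum_{i=1}^{r}\epsilon_i,
\]
a value independent of the chosen point. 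Hence $\Sigma_\epsilon$ has \emph{constant} K\"ahler angle
\[
 \varphi_\epsilon=\arccos\!\left(\frac1r\Bigl\lvert\textstyle\sum_{i=1}^{r}\epsilon_i\Bigr\rvert\right)=\arccos\!\left(\frac{\lvert r-2a\rvert}{r}\right),\qquad a:=\#\{i:\epsilon_i=-1\},
\]
which is nontrivial --- so $\Sigma_\epsilon$ is neither complex nor totally real --- precisely when $0<a<r$ and $a\ne r/2$. Taking $r=2m$ and $a=m-k$ shows $\arccos(k/m)\in\mathcal{J}$ for all integers $0\le k\le m$; since $\{k/m:m\ge1,\ 0\le k\le m\}$ is dense in $[0,1]$ and $\arccos$ is a homeomorphism of $[0,1]$ onto $[0,\pi/2]$, the set $\mathcal{J}$ contains a dense subset of $[0,\pi/2]$.

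The structural work is carried by the two inputs: Theorem~A, which guarantees that $\Sigma_\epsilon$ is genuinely totally geodesic (this also follows from the homothety argument above), and the polydisk theorem, which places an equal-curvature product $(\C\s{H}^1)^r$ totally geodesically and holomorphically inside an irreducible space; the rest is linear algebra. The point that needs care is that $\Sigma_\epsilon$ need not be preserved by any group of \emph{holomorphic} isometries of $N_r$ --- the natural subgroup acting transitively on $\Sigma_\epsilon$ acts anti-holomorphically on the factors with $\epsilon_i=-1$ --- so the constancy of the K\"ahler angle has to be read off from the point-independence of the inner product displayed above rather than from homogeneity.
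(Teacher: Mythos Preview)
Your argument is correct and follows the same core strategy as the paper: embed a product of mutually isometric rank-one Hermitian factors holomorphically and totally geodesically into an irreducible Hermitian symmetric space, then take a diagonal copy with a prescribed mix of holomorphic and anti-holomorphic projections to realise the K\"ahler angle $\arccos\bigl(\lvert 2s-k\rvert/k\bigr)$. The difference lies only in the choice of ambient space and of the embedding tool. The paper works on the compact side, explicitly constructing a holomorphic totally geodesic $(\C\s P^{n})^{k}$ inside the complex Grassmannian $\s G_k(\C^{n+k})$ by a direct matrix computation (Lemma~\ref{lemma:grassman}) and then applying Theorem~\ref{th:slant}; you instead invoke the polydisk theorem as a black box to place $(\C\s H^{1})^{r}$ inside the rank-$r$ Siegel domain, and reproduce the K\"ahler-angle computation of Theorem~\ref{th:slant} in that setting. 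Your route is slightly more economical for Theorem~B as stated, since the polydisk theorem is standard and avoids the bespoke Grassmannian lemma; the paper's construction, however, yields a bit more, realising each rational cosine value by totally geodesic $\C\s P^{m}$ of \emph{every} dimension $m\ge 1$, not only by disks. Your remark that the K\"ahler angle must be verified directly (rather than via holomorphic homogeneity) is well taken, and the computation you give --- using that a $2$-plane has a single K\"ahler angle and that the inner product $\langle \tilde J a,b\rangle$ is point-independent --- is exactly what is needed.
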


This article is structured as follows. In Section \ref{sect:preliminaries}, we start by recalling some known results concerning totally geodesic submanifolds in reducible symmetric spaces. In particular, we introduce the notions of $k$-diagonal and diagonal totally geodesic submanifolds. Then, we give a structure result for totally geodesic submanifolds of maximal rank in reducible symmetric spaces (see~Proposition~\ref{prop:maximalrank}).

In Section~\ref{sect:reduciblerank2}, we prove Theorem~A, which gives a classification of totally geodesic submanifolds in a product of an arbitrary number of rank one symmetric spaces $M=M_1\times\cdots\times M_r$. The proof is organized in the following manner.  Firstly, by Lemma~\ref{lemma:kblock}, every irreducible totally geodesic submanifold $\Sigma$ of $M$ must be $k$-diagonal for some $k\in\{1,\ldots,r\}$. Moreover, by Proposition~\ref{proposition:compactnoncompact}, a diagonal totally geodesic submanifold of a product of symmetric spaces of opposite type must be flat. Thus, we will assume that $M$ is a product of hyperbolic spaces over $\R$, $\C$, $\H$, and $\O$. Secondly, in Theorem~\ref{th:rank2reducible}, we classify totally geodesic submanifolds of a product of two hyperbolic spaces. As a consequence, we compute all the possible isometry types of $k$-diagonal irreducible totally geodesic submanifolds in an arbitrary product of hyperbolic spaces (see~Corollary~\ref{cor:sympolynomial}). Thirdly, in Lemma~\ref{lemma:proj}, we prove that if $\Sigma$ is a reducible semisimple totally geodesic submanifold of $M=M_1\times\cdots\times M_r$, then different factors of $\Sigma$ must be contained in different factors of $M$. This property allows to parametrize semisimple totally geodesic submanifolds using Young Tableaux adapted to~$M$, as proved in Proposition~\ref{prop:youngtableaux}. Then, Theorem~A follows from this proposition.

Finally, in Section~\ref{sect:slant}, we construct totally geodesic submanifolds with non-trivial constant K\"ahler angle in complex Grassmannians, and prove Theorem B. In order to prove this, firstly, we consider natural embeddings of products of complex projective spaces in complex Grassmannians. Then, we construct certain diagonal totally geodesic submanifolds in these products by combining holomorphic and antiholomorphic isometries.

\textbf{Acknowledgments}. This work was started during a stay at King's College London in Spring 2020. I am deeply indebted to my host Prof.~Jürgen Berndt for enlightening discussions. I would also like to thank Prof.~Sebastian Klein for helpful comments, and my advisors Prof.~José Carlos Díaz-Ramos and Prof.~Miguel Domínguez-Vázquez for their careful reading of the manuscript and their constant support.

\section{Totally geodesic submanifolds in reducible symmetric spaces}
\label{sect:preliminaries}
Let us start this section by recalling some basic facts  about Riemannian symmetric spaces and their totally geodesic submanifolds. 
\subsection{Totally geodesic submanifolds in  symmetric spaces}
\label{sect:preliminaries1}
Let $M=\s G/\s K$ be a connected Riemannian symmetric space, where $\s G$ is, up to some finite covering, equal to $\mathrm{Isom}^0(M)$, the connected component of the identity of the isometry group of $M$, and $\s K$ is the isotropy at some fixed point $o\in M$.  Let $\theta$ be a Cartan involution of $\g{g}$, the Lie algebra of $\s G$. Then, $\theta$ is a Lie algebra automorphism of $\g{g}$ which has $1,-1$ as  eigenvalues and their corresponding eigenspaces are $\g{k}$, the Lie algebra of $\s K$, and a subspace $\g{p}$ of $\g{g}$, respectively. Hence, we can decompose $\g{g}$ as $\g{g}=\g{k}\oplus \g{p}$.  Furthermore, we may identify $\g{p}$ with $T_o M$,  and we have the following bracket relations
\[[\g{k},\g{k}]\subset \g{k}, \quad [\g{k},\g{p}]\subset \g{p}, \quad [\g{p},\g{p}]\subset \g{k}.   \]
All maximal abelian subspaces of $\g{p}$ have the same dimension, which is called the rank of the symmetric space $M$. Let $\mathcal{B}$ be the Killing form of $\g{g}$. It turns out that $\g{k}$ and $\g{p}$ are orthogonal with respect to $\mathcal{B}$ and that every automorphism of $\g{g}$ is a linear isometry for $\mathcal{B}$.
If we consider the linearization at $o\in M$ of the isotropy action of $\s K$ on $M$, we get the \textit{isotropy representation} of the symmetric space $M$, which is defined as $k\in \s K\mapsto k_{*o}\in \mathsf{GL}(T_o M)$. This is equivalent to the adjoint representation of $\s G$ restricted to $\s K$ on $\g{p}$ via the identification of $\g{p}$ and $T_o M$. A symmetric space is \textit{irreducible} if the universal cover of $M$, which is again a symmetric space, is not isometric to  a non-trivial product of symmetric spaces. Otherwise, $M$ is said to be \textit{reducible}. 
A symmetric space is said to be of \textit{compact type}, \textit{non-compact type} or \textit{Euclidean type} if $\mathcal{B}_{|\g{p}\times \g{p}}$, the restriction of the Killing form $\mathcal{B}$ to $\g{p}$, is negative definite, positive definite or identically zero, respectively. Let
 $\widetilde{M}$ be the universal cover of $M$. Then, by the De-Rham Theorem,  it splits as $\widetilde{M}=M_0 \times M_{+} \times M_{-}$, where $M_0$ is isometric to $\mathbb{R}^n$ and is called the flat factor, and $M_{+}$ and $M_{-}$ are simply connected symmetric spaces of compact and non-compact type, respectively.  If $M= \s G/\s K$ is a symmetric space of non-compact type, we consider the positive definite inner product on $\g{g}$ given by $\langle X, Y\rangle:=-\mathcal{B}(X,\theta Y)$, for every $X,Y\in \g{g}$.

Let $\Sigma$ be a totally geodesic submanifold of $M=\s G/\s K$. We will assume throughout this paper that totally geodesic submanifolds are connected and complete since every connected totally geodesic submanifold is contained in a complete one. By the homogeneity of $M$, we can assume without loss of generality that $o\in \Sigma\subset M$. Cartan proved in \cite{Cartan} that a totally geodesic submanifold $\Sigma$ of $M$ with $o\in \Sigma$ and $T_o \Sigma=V\subset T_o M$ exists if and only if $V\subset T_o M$ satisfies $R_o(V,V)V\subset V$, where $R$ is the Riemannian curvature tensor of $M$. Using the identification of $\g{p}$ and $T_o M$, we can write the curvature tensor of $M$ at $o$ as
\[ R_o(X,Y)Z=-[[X,Y],Z], \]
for $X,Y,Z\in T_o M$. Thus, a subspace $V\subset \g{p}$ is curvature invariant if and only if $[[X,Y],Z]\in V$ for every $X,Y,Z\in V$. A subspace $V$ of $\g{p}$ with this property is a \textit{Lie triple system} in $\g{p}$. Hence, every totally geodesic submanifold of a symmetric space is again symmetric and there is a one-to-one correspondence between Lie triple systems $V$ in $\g{p}$ and totally geodesic submanifolds $\Sigma$ in $M$ containing $o\in M$.

An important notion which establishes a relation between symmetric spaces of compact type and non-compact type is duality. If we restrict our attention to simply connected symmetric spaces, there is a one-to-one correspondence between symmetric spaces of non-compact type and symmetric spaces of compact type. 
Moreover, totally geodesic submanifolds are preserved under duality. Hence, when studying totally geodesic submanifolds,   it will not be restrictive to assume that our ambient symmetric space, if it does not have  local flat factors, is either of compact type or of non-compact type.

\subsection{Diagonal totally geodesic submanifolds}
\label{sect:preliminaries2}
In what follows, we will introduce the notions of $k$-diagonal linear subspace and  of $k$-diagonal totally geodesic submanifold. 

Let $V$ be a vector space equipped with a positive definite scalar product and let us choose subspaces $V_i\subset V$ such that $V=\bigoplus_{i=1}^r V_i$ is an orthogonal decomposition of $V$, and denote by $\proj_i\colon V\rightarrow V_i$ the orthogonal projection onto $V_i$. We say that a subspace $W\subset V$ is \textit{$k$-diagonal} with respect to the above decomposition if there is a collection of indexes $\{i_1,\ldots,i_k\}\subset\{1,\ldots,r\}$ such that every non-zero element $w$ of $W$ satisfies $\proj_l w\neq 0$ when $l\in\{i_1,\ldots,i_k\}$; and reciprocally, if  $l\not\in\{i_1,\ldots, i_k \}$, we have $\proj_l w=0$. For instance, let $V:=\R^3$ and consider $V_i:=\spann\{e_i\}$, where $\{e_i\}_{i=1}^3$ is the canonical basis of $\R^3$. Then, we have that $W=\spann\{e_1+e_2\}$ is a $2$-diagonal subspace and $W'=\spann\{e_1 + e_2, e_1+e_3\}$ is not $k$-diagonal for any $k\in\{1,2,3\}$.

Let $M$ be a complete connected Riemannian manifold. Then, by the De-Rham Theorem, the universal cover of $\widetilde{M}$ of $M$ splits as a Riemannian product $\widetilde{M}={M}_0\times{M}_1\times\cdots\times {M}_r$, where ${M}_0$ is a Euclidean space and ${M}_i$ is a connected, complete, simply connected and non-flat Riemannian manifold for each $i\in\{1,\ldots, r\}$. Moreover, this decomposition is unique up to order. Let $p~=~(p_0,\ldots,p_r)\in \widetilde{M}$, $  \Sigma\subset M$  a submanifold, and $\pi\colon \widetilde{M} \rightarrow M$  the universal covering map such that $q~=~\pi(p)~\in~\Sigma$. Then, $T_p \widetilde{M}= \bigoplus_{i=0}^r T_{p_i}{M}_i$, and since $\pi_{*p}$ is a linear isometry,  we have an orthogonal decomposition of $T_q M$ given by 
\begin{equation}
\label{eq:decomposition1}
T_q M=\bigoplus_{i=0}^r \pi_{*p} T_{p_i} {M}_i.
\end{equation}
Then, we say that $\Sigma$ is \textit{$k$-diagonal} at $q\in\Sigma$ if $T_q \Sigma\subset T_q M$ is $k$-diagonal with respect to the  decomposition in Equation (\ref{eq:decomposition1}), and we say that $\Sigma$ is \textit{diagonal} at $q$ if  $\Sigma$ is $k$-diagonal at $q$ for some $k>1$. It is easy to check that this definition does not depend on $p\in\pi^{-1}(q)$.

If our ambient space is homogeneous and $\Sigma$ is extrinsically homogeneous, we  say that $\Sigma\subset M$ is \textit{$k$-diagonal} if there is some point $q\in \Sigma$ such that $T_q \Sigma\subset T_q M$ is $k$-diagonal with respect to the decomposition in Equation~(\ref{eq:decomposition1}). 

In the present article, our interest is on Riemannian symmetric spaces, so let $M=~\s G/\s K$ be a simply connected Riemannian symmetric space. Hence, by De-Rham Theorem, $M=~M_0\times\cdots\times M_r$, where $M_0$ is isometric to some Euclidean space and $M_i$ is a simply connected, irreducible symmetric space for each $i\in\{1,\ldots,r\}$. Moreover, we can identify $T_p M$ with
\begin{equation}
\label{eq:decompositionp}
\g{p}:=\bigoplus_{i=0}^r \g{p}_i,
\end{equation}
where $\g{p}_i$ is a Lie triple system in $\g{p}$, which is identified with the tangent space $T_{p_i} M_i$ of the $i$-th factor in the decomposition of $M$. Observe that $[\g{p}_i,\g{p}_j]=0$, for each $i,j\in \{0,\ldots,r\}$, $i\neq j$, and that $\g{p}_0$ is identified with the flat factor in the decomposition of $M$ given by De-Rham Theorem, so $[\g{p}_0,\g{p}_0]=0$. 
Furthermore, we can define 
\[ \g{k}_i:=[\g{p}_i,\g{p}_i],  \qquad \g{g}_i:=\g{k}_i\oplus \g{p}_i, \]
where $i=1,\ldots,r$. Then, $\g{k}_i$ and $\g{g}_i$ are the Lie algebras corresponding to the isotropy and isometry groups of $M_i$, respectively. On $\g{p}$ (and hence, on each $\g{p}_i$) we will consider the inner product $\langle\cdot,\cdot\rangle$ induced from the metric of $M$ (resp.\ from the metric of $M_i$) via the identification $\g{p}\cong T_o M$ (resp.\ $\g{p}_i\cong T_{o_i}M_i$).

Let $\Sigma\subset M$ be a totally geodesic submanifold through the point $o\in \Sigma\subset M$. Then, there is a Lie triple system $\g{p}_{\Sigma} \subset\g{p}$ corresponding to $\Sigma$. Notice that if $\g{a}_{\Sigma}$ is a maximal abelian subspace of $\g{p}_{\Sigma}$, then it is contained in some maximal abelian subspace $\g{a}$ of $\g{p}$. 
By the discussion above, since totally geodesic submanifolds in symmetric spaces are extrinsically homogeneous, a totally geodesic submanifold $\Sigma\subset M$ is $k$-diagonal if and only if $\g{p}_{\Sigma}\subset \g{p}$ is $k$-diagonal with respect to the decomposition in (\ref{eq:decompositionp}).

Notice that $\proj_i \g{p}_{\Sigma}$ is a Lie triple system in $\g{p}_i$, where $\proj_i\colon \g{p}\rightarrow \g{p}_i$ is the orthogonal projection onto $\g{p}_i$. Indeed, given $X,Y,Z\in\g{p}_{\Sigma}$, we have  
\[ [[\proj_i X,\proj_i Y],\proj_i Z]=\proj_i[[X,Y],Z]\in \proj_i \g{p}_{\Sigma},   \]
since the projection $\proj_i\colon\g{g}\rightarrow \g{g}_i$ is a Lie algebra homomorphism and $\g{p}_{\Sigma}$ is a Lie triple system. Hence, $\g{g}^i_{\Sigma}:=\proj_i \g{p}_{\Sigma} \oplus [\proj_i \g{p}_{\Sigma},\proj_i \g{p}_{\Sigma}]$ is a Lie subalgebra of $\g{g}_i$. Let $\s G_{\Sigma}$ and $\s G^i_{\Sigma}$ be the connected subgroups of $\s G$ with Lie algebras $\g{g}_{\Sigma}:=\g{p}_{\Sigma}\oplus[\g{p}_{\Sigma},\g{p}_{\Sigma}]$ and $\g{g}^i_{\Sigma}$, respectively. Therefore, by \cite[Proposition 11.1.2]{BCO}, we have that $\Sigma=\s G_{\Sigma}\cdot o$ and $\proj_i \Sigma=\proj_i(\s G_{\Sigma}\cdot o)=\s G^i_{\Sigma}\cdot o$ are totally geodesic submanifolds in $M$ and $M_i$, respectively,  where by $\proj_i$ we also denote the projection $M\to M_i$. Thus, we obtain the following useful lemma.
\begin{lemma}
	\label{lemma:projtriple}
	Let $M=M_0\times\cdots\times M_r$ be a product of simply connected symmetric spaces and $\Sigma$ a totally geodesic submanifold of $M$. Then, $\proj_i \Sigma$ is a totally geodesic submanifold of $M_i$ for each $i\in\{0,\ldots,r\}$.
\end{lemma}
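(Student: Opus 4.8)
The plan is to restate the lemma in the language of Lie triple systems and exploit the fact that the projection $\proj_i\colon\g{g}\to\g{g}_i$ onto the $i$-th ideal is a surjective Lie algebra homomorphism. By homogeneity of $M$ we may assume the base point $o$ lies in $\Sigma$, so that $\Sigma$ corresponds to a Lie triple system $\g{p}_\Sigma\subset\g{p}=\bigoplus_{j=0}^r\g{p}_j$ via Cartan's criterion. The first step is to verify that $\proj_i\g{p}_\Sigma$ is again a Lie triple system in $\g{p}_i$: for $X,Y,Z\in\g{p}_\Sigma$ one has
\[
[[\proj_i X,\proj_i Y],\proj_i Z]=\proj_i[[X,Y],Z]\in\proj_i\g{p}_\Sigma,
\]
because $\proj_i$ commutes with the bracket (the $\g{g}_j$ being commuting ideals of $\g{g}$) and $\g{p}_\Sigma$ is curvature invariant. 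Equivalently, $\proj_i\g{p}_\Sigma$ satisfies Cartan's condition inside $M_i$, hence it is the tangent space at $o$ of a complete totally geodesic submanifold of $M_i$.

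The second step is to identify this abstract totally geodesic submanifold with the image $\proj_i\Sigma$. Set $\g{g}_\Sigma:=\g{p}_\Sigma\oplus[\g{p}_\Sigma,\g{p}_\Sigma]$ and $\g{g}^i_\Sigma:=\proj_i\g{p}_\Sigma\oplus[\proj_i\g{p}_\Sigma,\proj_i\g{p}_\Sigma]$, which are Lie subalgebras of $\g{g}$ and $\g{g}_i$ respectively, and let $\s{G}_\Sigma$, $\s{G}^i_\Sigma$ be the corresponding connected subgroups. By \cite[Proposition 11.1.2]{BCO}, the orbit $\s{G}_\Sigma\cdot o$ is exactly $\Sigma$, and likewise $\s{G}^i_\Sigma\cdot o$ is the totally geodesic submanifold of $M_i$ with tangent space $\proj_i\g{p}_\Sigma$ at $o$. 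Since $\proj_i\colon\g{g}\to\g{g}_i$ intertwines the respective actions (the kernel of $\proj_i$ integrates to the subgroup acting trivially on $M_i$), we get $\proj_i(\s{G}_\Sigma\cdot o)=\s{G}^i_\Sigma\cdot o$, which on the level of manifolds reads $\proj_i\Sigma=\s{G}^i_\Sigma\cdot o$; this is totally geodesic in $M_i$, as desired.

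The case $i=0$ is covered by the same argument with a trivial twist: $\proj_0$ lands in the abelian $\g{g}_0=\g{p}_0$, so $\proj_0\g{p}_\Sigma$ is automatically a Lie triple system, and $\proj_0\Sigma$ is an affine subspace of the Euclidean factor $M_0$, hence totally geodesic. Since essentially all of this is already spelled out in the paragraph preceding the statement, I do not expect a genuine obstacle; the only point that deserves a line of care is checking that the algebraic projection $\proj_i$ of Lie algebras corresponds to the geometric projection $M\to M_i$ and is compatible with orbit maps, which follows at once from the product decomposition $\g{g}=\bigoplus_j\g{g}_j$ with $[\g{g}_j,\g{g}_l]=0$ for $j\neq l$.
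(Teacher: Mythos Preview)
Your proposal is correct and follows essentially the same argument as the paper: the paper's proof is precisely the paragraph preceding the lemma, which verifies that $\proj_i\g{p}_\Sigma$ is a Lie triple system via the homomorphism property of $\proj_i$, and then identifies $\proj_i\Sigma$ with the orbit $\s G^i_\Sigma\cdot o$ using \cite[Proposition~11.1.2]{BCO}. Your treatment is slightly more explicit in separating the flat case $i=0$, but the content is identical.
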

The following result gives a sufficient condition for a totally geodesic submanifold $\Sigma\subset M$ to be not diagonal, namely $\rank \Sigma=\rank M$.
\begin{proposition}
\label{prop:maximalrank}
Let $M=M_1\times\cdots\times M_r$ be a product of simply connected irreducible symmetric spaces and $\Sigma\subset M$ a totally geodesic submanifold with the same rank as $M$. Then, $\Sigma=\Sigma_1\times\cdots\times\Sigma_r$, where $\Sigma_i\subset M_i$ is a totally geodesic submanifold.
\end{proposition}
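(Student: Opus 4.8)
The plan is to work at the level of Lie triple systems. Let $\g{p}_\Sigma\subset\g{p}=\bigoplus_{i=1}^r\g{p}_i$ be the Lie triple system corresponding to $\Sigma$, and pick a maximal abelian subspace $\g{a}_\Sigma$ of $\g{p}_\Sigma$. Since $\rank\Sigma=\rank M$, the subspace $\g{a}_\Sigma$ is also maximal abelian in $\g{p}$, so after conjugating by an element of $\s K$ (which does not affect the isometry type of $\Sigma$) we may assume $\g{a}_\Sigma=\g{a}=\bigoplus_{i=1}^r\g{a}_i$, where $\g{a}_i$ is maximal abelian in $\g{p}_i$. The key structural fact I would exploit is that a Lie triple system decomposes as a sum of its restricted root spaces relative to $\g{a}$: writing $\Delta$ for the restricted root system of $M$ with respect to $\g{a}$, we have $\g{p}=\g{a}\oplus\bigoplus_{\lambda\in\Delta^+}\g{p}_\lambda$, and since $\g{a}_\Sigma=\g{a}\subset\g{p}_\Sigma$, the bracket relations $[\g{a},[\g{a},\cdot]]$ force $\g{p}_\Sigma$ to be invariant under $\ad(H)^2$ for all $H\in\g{a}$, hence $\g{p}_\Sigma=\g{a}\oplus\bigoplus_{\lambda\in\Delta^+}(\g{p}_\Sigma\cap\g{p}_\lambda)$.

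Next I would use that the root system of a product is the disjoint union $\Delta=\Delta_1\sqcup\cdots\sqcup\Delta_r$, where $\Delta_i$ is the root system of $M_i$ with respect to $\g{a}_i$, and crucially each root $\lambda\in\Delta_i$ vanishes on $\g{a}_j$ for $j\neq i$; correspondingly $\g{p}_\lambda\subset\g{p}_i$ for $\lambda\in\Delta_i$. Therefore the decomposition of $\g{p}_\Sigma$ above already respects the splitting: setting $\g{p}_\Sigma^i:=\g{p}_\Sigma\cap\g{p}_i=\g{a}_i\oplus\bigoplus_{\lambda\in\Delta_i^+}(\g{p}_\Sigma\cap\g{p}_\lambda)$, one gets $\g{p}_\Sigma=\bigoplus_{i=1}^r\g{p}_\Sigma^i$ with $\g{p}_\Sigma^i\subset\g{p}_i$. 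Each $\g{p}_\Sigma^i$ is a Lie triple system in $\g{p}_i$ (it equals $\proj_i\g{p}_\Sigma$, which is a Lie triple system by the computation preceding Lemma~\ref{lemma:projtriple}, and the inclusion $\g{p}_\Sigma^i\subset\proj_i\g{p}_\Sigma$ becomes an equality once $\g{p}_\Sigma$ splits). Since $[\g{p}_i,\g{p}_j]=0$ for $i\neq j$, the totally geodesic submanifold associated to $\bigoplus_i\g{p}_\Sigma^i$ is the Riemannian product of the totally geodesic submanifolds $\Sigma_i\subset M_i$ associated to $\g{p}_\Sigma^i$, which is the desired conclusion.

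The main obstacle is justifying the root-space decomposition of the Lie triple system $\g{p}_\Sigma$, i.e.\ that $\g{a}_\Sigma$ being maximal abelian in all of $\g{p}$ forces $\g{p}_\Sigma$ to be a sum of intersections with root spaces. The cleanest argument: for generic $H\in\g{a}$ the operator $\ad(H)^2$ is diagonalizable on $\g{p}$ with eigenspaces exactly $\g{a}$ (eigenvalue $0$) and the $\g{p}_\lambda$ (eigenvalue $\lambda(H)^2$, distinct for a generic $H$ by standard genericity); since $\g{p}_\Sigma$ is $\ad(H)^2$-invariant (because $H\in\g{a}=\g{a}_\Sigma\subset\g{p}_\Sigma$ and $\g{p}_\Sigma$ is a Lie triple system), it is a sum of eigenspaces of this operator, which gives the claim. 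Everything else is bookkeeping with the product structure of the restricted root system.
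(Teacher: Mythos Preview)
Your proof is correct, but the paper takes a shorter and more elementary route that avoids restricted root spaces entirely. Instead of fixing a single maximal abelian subspace of $\g{p}_\Sigma$ and decomposing $\g{p}_\Sigma$ via $\ad(H)^2$-invariance, the paper varies the maximal abelian subspace: given any $X_j\in\proj_j\g{p}_\Sigma$, lift it to some $X\in\g{p}_\Sigma$ and choose a maximal abelian $\g{a}_\Sigma\subset\g{p}_\Sigma$ containing $X$; since $\rank\Sigma=\rank M$, this $\g{a}_\Sigma$ is maximal abelian in $\g{p}$ and hence splits as $\bigoplus_i\g{a}_i$ with $\g{a}_i\subset\g{p}_i$, forcing $X_j=\proj_j X\in\g{a}_j\subset\g{p}_\Sigma$. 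This immediately gives $\proj_j\g{p}_\Sigma\subset\g{p}_\Sigma$ for all $j$, hence $\g{p}_\Sigma=\bigoplus_j\proj_j\g{p}_\Sigma$.

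The trade-off: the paper's argument is a two-line trick requiring only the standard fact that maximal abelian subspaces of $\g{p}$ split along the factors, whereas your argument invokes the restricted root decomposition and a genericity choice of $H$ to separate eigenvalues of $\ad(H)^2$. Your approach is more structural and yields as a byproduct the explicit root-space description $\g{p}_\Sigma=\g{a}\oplus\bigoplus_{\lambda\in\Delta^+}(\g{p}_\Sigma\cap\g{p}_\lambda)$, which could be useful if one wanted finer information about $\Sigma$; for the bare statement of the proposition, though, the paper's method is more economical. One minor remark: your conjugation by $\s K$ is unnecessary, since any maximal abelian subspace of $\g{p}$ already splits as $\bigoplus_i\g{a}_i$ (this is exactly what the paper uses), so you can drop that step and work directly with whatever $\g{a}_\Sigma$ you started with.
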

\begin{proof}
Let $\g{p}_{\Sigma}$ be the Lie triple system in $\g{p}$ corresponding to $\Sigma$. We clearly have  $\g{p}_{\Sigma}\subset\bigoplus_{i=1}^r \proj_i \g{p}_{\Sigma}$. Let $X_j\in \proj_j \g{p}_{\Sigma}$ for some $j\in\{1,\ldots,r\}$, and $X\in\g{p}_{\Sigma}$ such that $\proj_j X=X_j$. Let $\g{a}_{\Sigma}\subset \g{p}_{\Sigma}$ be a maximal abelian subspace containing $X$. Since $\Sigma$ has the same rank as $M$, $\g{a}_{\Sigma}$ is a maximal abelian subspace of $\g{p}$. But every maximal abelian subspace of $\g{p}$ is the sum of maximal abelian subspaces $\g{a}_i$ of $\g{p}_i$. Thus, $X=\sum_{i=1}^r X_i$, where $X_i\in\g{a}_i\subset \g{a}_{\Sigma}\subset \g{p}_{\Sigma}$ and $\g{a}_i$ is a maximal abelian subspace of $\g{p}_i$. In particular, $X_j$ belongs to $\g{p}_{\Sigma}$. Since $j\in\{1,\ldots,r\}$ was arbitrary, we have  $\g{p}_{\Sigma}=\bigoplus_{i=1}^r \proj_i \g{p}_{\Sigma}$. By Lemma \ref{lemma:projtriple}, we have $\Sigma=\Sigma_1\times\cdots\times \Sigma_r$, where each $\Sigma_i:=\proj_i \Sigma$ is a totally geodesic submanifold of $M_i$, for each $i\in\{1,\ldots,r\}$.
\end{proof}

\section{Totally geodesic submanifolds in\\ products of symmetric spaces of rank one}
\label{sect:reduciblerank2}
In this section, we will give a classification of  totally geodesic submanifolds in products of symmetric spaces of rank one.

Let us recall the classification of totally geodesic submanifolds in symmetric spaces of rank one. Let $M$ be a symmetric space of non-compact type and rank one. Then, $M$ is either a real hyperbolic space $\R \s H^{n}$, $n\geq 2$, a complex hyperbolic space $\C \s H^{n}$, $n\geq 2$, a quaternionic hyperbolic space $\H \s H^{n}$, $n\geq 2$, or the Cayley hyperbolic plane $\O \s H^2$. We use the notation $\mathbb{F} \s H^n$, where $\mathbb{F}\in \{\R,\C,\H,\O  \}$ and $n=2$ if $\mathbb{F}=\O$. Furthermore,  the metric of $\mathbb{F} \s H^n$ is such that its sectional curvature is equal to $-c$ in the real case, or pinched between $-c$ and $-c/4$ in the other cases, for some $c>0$. In this case we will write $\mathbb{F} \s H^n(c)$, and simply $\mathbb{F} \s H^n$ when $c=1$. Wolf \cite{wolfrankone} classified totally geodesic submanifolds in symmetric spaces of rank one and compact type.  Hence, by duality we  obtain the list of proper, non-flat, totally geodesic submanifolds of symmetric spaces of non-compact type and rank one up to congruence (see Table \ref{table:rankone}).
\begin{table}[h!]
	\renewcommand{\arraystretch}{1.2}
	\centering
		\begin{tabular}{lll}
	\hline
	\multicolumn{1}{|l}{$\mathbb{R}\s H^n$} &                                                          & \multicolumn{1}{l|}{}                  \\ \hline
	\multicolumn{1}{|c|}{}               & \multicolumn{1}{l|}{$\mathbb{R}\s H^k$}                     & \multicolumn{1}{c|}{$2\leq k\leq n-1$} \\ \hline
	\multicolumn{1}{|l}{$\mathbb{C}\s H^n$} &                                                          & \multicolumn{1}{l|}{}                  \\ \hline
	\multicolumn{1}{|l|}{}               & \multicolumn{1}{l|}{$\mathbb{C}\s H^k$}                     & \multicolumn{1}{c|}{$2\leq k\leq n-1$} \\
	\multicolumn{1}{|l|}{}               & \multicolumn{1}{l|}{$\mathbb{R}\s H^k(1/4)$}                     & \multicolumn{1}{c|}{$2\leq k\leq n$} \\
	\multicolumn{1}{|l|}{}               & \multicolumn{1}{l|}{$\mathbb{R}\s H^2$}                & \multicolumn{1}{l|}{ } \\ \hline
	\multicolumn{1}{|l}{$\mathbb{H}\s H^n$} &                                                          & \multicolumn{1}{l|}{}                  \\ \hline
	\multicolumn{1}{|l|}{}               & \multicolumn{1}{l|}{$\mathbb{H}\s H^k$}                     & \multicolumn{1}{c|}{$2\leq k\leq n-1$} \\
	\multicolumn{1}{|l|}{}               & \multicolumn{1}{l|}{$\mathbb{C}\s H^k$}                     & \multicolumn{1}{c|}{$2\leq k\leq n$} \\
	\multicolumn{1}{|l|}{}               & \multicolumn{1}{l|}{$\mathbb{R}\s H^k(1/4)$}                & \multicolumn{1}{c|}{$2\leq k\leq n$} \\
	\multicolumn{1}{|l|}{}               & \multicolumn{1}{l|}{$\mathbb{R}\s H^k$} & \multicolumn{1}{c|}{$2\leq k\leq 4$}   \\ \hline
	\multicolumn{1}{|l}{$\mathbb{O}\s H^2$} &                                                          & \multicolumn{1}{l|}{}                  \\ \hline
	\multicolumn{1}{|l|}{}               & \multicolumn{1}{l|}{$\mathbb{H}\s H^2$}                     & \multicolumn{1}{l|}{}  \\
	\multicolumn{1}{|l|}{}               & \multicolumn{1}{l|}{$\mathbb{C}\s H^2$}                     & \multicolumn{1}{l|}{}  \\
	\multicolumn{1}{|l|}{}               & \multicolumn{1}{l|}{$\mathbb{R}\s H^2(1/4)$}                & \multicolumn{1}{l|}{}                  \\
	\multicolumn{1}{|l|}{}               & \multicolumn{1}{l|}{$\mathbb{R}\s H^k$}                     & \multicolumn{1}{c|}{$2\leq k \leq 8$} \\ \hline
	&                                                          &                                       
\end{tabular}
	\vspace{0.2cm}
\caption{Totally geodesic submanifolds in symmetric spaces of non-compact type and rank one, up to congruence.}
\label{table:rankone}
\end{table}

Notice that in $\mathbb{C}\s H^2$ there are two non-congruent totally geodesic submanifolds homothetic to $\mathbb{R} \s H^2$:  $\mathbb{R} \s H^2(1/4)$ and  $\mathbb{R} \s H^2$ (which are totally real and complex in $\mathbb{C} \s H^2$, respectively). Also, in $\mathbb{H}\s H^3$ there are two non-congruent totally geodesic submanifolds homothetic to $\mathbb{R}\s H^3$: $\mathbb{R}\s H^3(1/4)$ and $\mathbb{R}\s H^3\subset\mathbb{R}\s H^4$. Finally, in $\mathbb{H}\s H^4$ there are two non-congruent totally geodesic submanifolds homothetic to $\mathbb{R}\s H^4$: $\mathbb{R}\s H^4(1/4)$ and $\mathbb{R}\s H^4$.

Now, we will set the following notation for the rest of this section. Let $M=M_1\times\cdots\times M_r$, where $M_i=\s G_i/\s K_i=\mathbb{F}_i\s H^{n_i}(c_i)$ is a symmetric space of non-compact type and rank one for each $i\in\{1,\ldots,r\}$.  Let $o=(o_1,\ldots,o_r)\in M$. Hence, we can identify $T_o M$ with a Lie triple system $\g{p}$ such that $\g{p}=\bigoplus_{i=1}^r \g{p}_i$, where $\g{p}_i$ is identified with $T_{o_{i}}\mathbb{F}_i \s H^{n_i}(c_i)$. This implies that $\g{g}_i=\g{p}_i\oplus[\g{p}_i,\g{p}_i]$ is the Lie algebra of $\s G_i$. 
\begin{lemma}
	\label{lemma:kblock}
	Let $M=M_1\times \cdots \times M_r$, where each $M_i$ is a simply connected, irreducible symmetric space. Let $\Sigma$ be an irreducible, non-flat, totally geodesic submanifold of $M$.
Then:
	\begin{enumerate}
		\item[i)] $\Sigma$ is $k$-diagonal for some $k\in\{1,\ldots,r\}$ in $M$. 
	\end{enumerate}
Moreover, if $M$ is of non-compact type, the following statements hold:
	\begin{enumerate}
		\item[ii)] There is some permutation $\sigma$ of $\{1,\ldots,r\}$ such that $\Sigma$ is a totally geodesic submanifold of $N_{\sigma(1)}\times \cdots \times N_{\sigma(k)}$, where $N_{\sigma(j)}:=\proj_{\sigma(j)} \Sigma$, $j\in\{1,\ldots,r\}$, is a totally geodesic submanifold of $M_{\sigma(j)}$ for every $j\in\{1,\ldots,k\}$.
		\item[iii)] The embedding of $\Sigma$ is given by $\Psi\colon \Sigma\rightarrow M$, $\Psi(p)=(\Psi_1(p),\ldots,\Psi_r(p))$, where each map $\Psi_j:=\proj_j\colon \Sigma\rightarrow N_{\sigma(j)}$ is a homothety for every $j\in\{1,\ldots,k\}$, and $\Psi_l\colon\Sigma\rightarrow N_{\sigma(l)}$ is a constant map for every $l\in\{k+1,\ldots,r\}$.
	\end{enumerate}
\end{lemma}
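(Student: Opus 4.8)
The plan is to work entirely with the Lie triple system $\g{p}_{\Sigma}\subset\g{p}=\bigoplus_{i=1}^r\g{p}_i$ associated with $\Sigma$, writing $\g{k}_{\Sigma}=[\g{p}_{\Sigma},\g{p}_{\Sigma}]$ and $\g{g}_{\Sigma}=\g{p}_{\Sigma}\oplus\g{k}_{\Sigma}$. The key fact, already recorded above, is that each projection $\proj_i\colon\g{g}\to\g{g}_i$ is a Lie algebra homomorphism preserving the Cartan decomposition; in particular $\proj_i[A,X]=[\proj_i A,\proj_i X]$ for $A\in\g{k}_{\Sigma}$ and $X\in\g{p}_{\Sigma}$. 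For (i) I would first check that $\Ker(\proj_i|_{\g{p}_{\Sigma}})$ is $\ad(\g{k}_{\Sigma})$-invariant: if $\proj_i X=0$ then $\proj_i[A,X]=[\proj_i A,0]=0$. Since $\Sigma$ is irreducible and non-flat, $\g{k}_{\Sigma}$ acts irreducibly on $\g{p}_{\Sigma}$, so this kernel is either $0$ or all of $\g{p}_{\Sigma}$. Letting $\{i_1,\dots,i_k\}$ be the set of indices with $\proj_i\g{p}_{\Sigma}\neq 0$ (non-empty because $\Sigma$ is non-flat), every non-zero element of $\g{p}_{\Sigma}$ then projects non-trivially onto $\g{p}_i$ exactly when $i\in\{i_1,\dots,i_k\}$, which is precisely the assertion that $\g{p}_{\Sigma}$ is $k$-diagonal.

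Assume now $M$ is of non-compact type, and pick a permutation $\sigma$ with $\{\sigma(1),\dots,\sigma(k)\}=\{i_1,\dots,i_k\}$. Since $\proj_{\sigma(l)}\g{p}_{\Sigma}=0$ for $l>k$, we get $\g{p}_{\Sigma}\subset\bigoplus_{j=1}^k\g{p}_{\sigma(j)}$, and as a totally geodesic submanifold through $o$ is determined by its tangent Lie triple system, $\Sigma\subset M_{\sigma(1)}\times\cdots\times M_{\sigma(k)}$; since $\proj_{\sigma(j)}\Sigma=N_{\sigma(j)}$ we even have $\Sigma\subset N_{\sigma(1)}\times\cdots\times N_{\sigma(k)}$, each $N_{\sigma(j)}$ being totally geodesic in $M_{\sigma(j)}$ by Lemma~\ref{lemma:projtriple}. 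This gives (ii), and also shows $\Psi_l$ is constant for $l>k$, since $N_{\sigma(l)}$ is a point. For $j\le k$, I would upgrade the injectivity of $\proj_{\sigma(j)}|_{\g{p}_{\Sigma}}$ from (i) to injectivity of $\proj_{\sigma(j)}|_{\g{g}_{\Sigma}}$: its kernel is an ideal $\g{i}$ of $\g{g}_{\Sigma}$ with $\g{i}\cap\g{p}_{\Sigma}=0$, hence $\g{i}\subset\g{k}_{\Sigma}$ and $[\g{i},\g{p}_{\Sigma}]\subset\g{i}\cap\g{p}_{\Sigma}=0$; using $\g{k}_{\Sigma}=[\g{p}_{\Sigma},\g{p}_{\Sigma}]$ and the Jacobi identity one gets $[\g{i},\g{g}_{\Sigma}]=0$, so $\g{i}$ is central, and since non-compact type forces $\g{g}_{\Sigma}$ to be semisimple, $\g{i}=0$. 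Thus $\proj_{\sigma(j)}\colon\g{g}_{\Sigma}\to\g{g}^{\sigma(j)}_{\Sigma}$ is a Lie algebra isomorphism, the induced homomorphism $\s G_{\Sigma}\to\s G^{\sigma(j)}_{\Sigma}$ has discrete kernel and intertwines the actions on $\Sigma=\s G_{\Sigma}\cdot o$ and $N_{\sigma(j)}=\s G^{\sigma(j)}_{\Sigma}\cdot o$, so $\Psi_j$ is a surjective, equivariant local diffeomorphism. Pulling back the metric of $N_{\sigma(j)}$ by $\Psi_j$ yields a $\s G_{\Sigma}$-invariant metric on the irreducible symmetric space $\Sigma$, hence a constant multiple of its own; therefore $\Psi_j$ is a homothetic Riemannian covering, and since complete totally geodesic submanifolds of the Hadamard manifold $M$ are simply connected, $\Psi_j$ is a homothety. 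This establishes (iii).

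I expect the main obstacle to be the final part of (iii): passing from injectivity of $\proj_{\sigma(j)}$ on $\g{p}_{\Sigma}$ to injectivity on all of $\g{g}_{\Sigma}$, and then concluding that $\Psi_j$ is a genuine homothety rather than merely a homothetic covering. This is exactly where one must combine irreducibility of $\Sigma$ (uniqueness of the invariant metric), non-compact type (semisimplicity, hence trivial center, of $\g{g}_{\Sigma}$), and simple connectedness of complete totally geodesic submanifolds of Hadamard manifolds. By contrast, part (i) is comparatively formal, using only that the componentwise projections are Cartan-decomposition-preserving Lie algebra homomorphisms together with the irreducibility of the isotropy representation of $\Sigma$.
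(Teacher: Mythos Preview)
Your proposal is correct and, for parts (i) and (ii), matches the paper's proof almost verbatim: both show that $\Ker(\proj_i|_{\g{p}_\Sigma})$ is an invariant subspace for the (irreducible) isotropy representation of $\Sigma$, and both upgrade injectivity from $\g{p}_\Sigma$ to $\g{g}_\Sigma$ by observing that the kernel is an ideal. The only cosmetic difference is that the paper invokes directly that $\g{g}_\Sigma$ is \emph{simple} (since $\Sigma$ is irreducible of non-compact type), whereas you argue that the kernel ideal is central and then use semisimplicity---a slightly longer route to the same conclusion.

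For part (iii) the two proofs diverge in how they pass from the infinitesimal to the global statement. The paper applies Schur's lemma at the level of $\g{p}_\Sigma$ to see that $\proj_{\sigma(j)}$ is a linear homothety there, then quotes a result of Wolf to obtain an affine diffeomorphism and pushes the homothety factor along parallel transports. You instead pull back the metric of $N_{\sigma(j)}$ along the equivariant map $\Psi_j$, use uniqueness (up to scaling) of $\s G_\Sigma$-invariant metrics on the irreducible space $\Sigma$ to deduce that $\Psi_j$ is a homothetic covering, and finish with simple connectedness in the Hadamard setting. Both arguments rest on the same Schur-type uniqueness; yours avoids the external reference to Wolf's book at the cost of needing the (easy) observation that $N_{\sigma(j)}$, not just $\Sigma$, is simply connected so that the covering $\Sigma\to N_{\sigma(j)}$ is trivial.
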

\begin{proof}
	Let $\Sigma$ be an irreducible, non-flat, totally geodesic submanifold of $M$. Let $\proj_i\colon \g{g}_{\Sigma}\rightarrow \g{g}_i$ be the $i$-th orthogonal projection onto $\g{g}_i$ for $i\in\{1,\ldots,r\}$. We will prove that $\proj_i$ is either the zero map or injective. Since $\Sigma$ is an irreducible symmetric space, its isotropy representation is irreducible. Furthermore, since $\Sigma$ is semisimple by our assumptions, we have that $\g{k}_{\Sigma}:=[\g{p}_{\Sigma},\g{p}_{\Sigma}]$ is the Lie algebra of the isotropy group of $\Sigma$. However, $\Ker \proj_{i{\vert\g{p}_{\Sigma}}}\subset \g{p}_{\Sigma}$ is an invariant subspace under the isotropy representation of $\Sigma$ since 
	\[\proj_{i}[Z,X]=[\proj_{i} Z, \proj_{i{\vert\g{p}_{\Sigma}}} X]=0,\]
	 where $X\in \Ker \proj_{i{\vert\g{p}_{\Sigma}}}$ and $Z\in \g{k}_{\Sigma}$. Thus, $\Ker  \proj_i =0$ or $\Ker \proj_{i{\vert\g{p}_{\Sigma}}}=\g{p}_{\Sigma}$. This implies  
	$\g{p}_{\Sigma}\subset \g{p}_{\sigma(1)}\oplus \cdots\oplus \g{p}_{\sigma(k)}$ for some $k\in\{1,\ldots,r\}$ and some permutation $\sigma$ of $\{1,\ldots,r\}$ such that $\Ker\proj_{\sigma(j){\vert\g{p}_{\Sigma}}}=0$ for $j\in\{1,\ldots,k\}$ and $\Ker \proj_{\sigma(l){\vert\g{p}_{\Sigma}}}=\g{p}_{\Sigma}$ for $l\in\{k+1,\ldots,r\}$. For a non-zero $X\in\g{p}_{\Sigma}$, we have $\proj_{\sigma(j)} X\neq 0$ if and only if $j\in\{1,\ldots, k\}$. Hence, every non-zero element $X$ in $\g{p}_{\Sigma}$ can be written as $X=\sum_{j=1}^k X_j$, where each $X_j\in \g{p}_{\sigma(j)}$ is non-zero. Thus, $\Sigma$ is $k$-diagonal and we proved \textit{i)}.

	Furthermore, $\g{p}_{\Sigma}\subset \proj_{\sigma(1)} \g{p}_{\Sigma}\oplus\cdots\oplus\proj_{\sigma(k)} \g{p}_{\Sigma}$, where each $\proj_{\sigma(j)} \g{p}_{\Sigma}$ is a Lie triple system of $\g{p}_{\Sigma}$ by Lemma \ref{lemma:projtriple}.
	  Moreover, $\Ker\proj_{\sigma(j)}$ is an ideal of $\g{g}_{\Sigma}$. For every $j\in\{1,\ldots,k\}$ we have $\Ker\proj_{{\sigma(j)\rvert \g{p}_{\Sigma} }}=0$, which implies that $\Ker\proj_{\sigma(j)}=0$, since $\g{g}_{\Sigma}$ is simple as $\Sigma$ is an irreducible symmetric space of non-compact type. Thus, $\proj_{\sigma(j)}\colon\g{g}_{\Sigma}\rightarrow\proj_{\sigma(j)}\g{g}_{\Sigma}$ is a Lie algebra isomorphism, for each $j\in\{1,\ldots,k\}$. Hence, $\proj_{\sigma(j)} \g{g}_{\Sigma}=\proj_{\sigma(j)}\g{p}_{\Sigma}\oplus[\proj_{\sigma(j)}\g{p}_{\Sigma},\proj_{\sigma(j)}\g{p}_{\Sigma}]$ is the Lie algebra of the isometry group of $N_{\sigma(j)}$, the totally geodesic submanifold of $M_{\sigma(j)}$ associated with the Lie triple system $\proj_{\sigma(j)}\g{p}_{\Sigma}$ in $\g{p}_{\sigma(j)}$.	Additionally, taking into account that $\Ker\proj_{\sigma(j)\vert_{\g{p}_{\Sigma}}}=\g{p}_{\Sigma}$ if and only if $l\in\{k+1,\ldots,r\}$, we have $\Sigma$ projects onto a point in $M_{\sigma(l)}$ if and only if $l\in\{k+1,\ldots,r\}$.   Therefore, we have that $\Sigma\subset N_{\sigma(1)}\times \cdots\times N_{\sigma(k)}$, where $N_{\sigma(j)}$ is a totally geodesic submanifold of  $M_{\sigma(j)}$, which proves \textit{ii)}.
	
 Finally, we will prove that $\proj_{\sigma(i)}$ is a homothety between $\Sigma$ and $N_{\sigma(i)}$. Let us fix some $i\in\{1,\ldots,k\}$  and let us consider the inner product in $\g{p}_{\Sigma}$ given by $(X,Y)_i:=\langle \proj_{\sigma(i)} X, \proj_{\sigma(i)} Y\rangle$, for $X, Y\in\g{p}_{\Sigma}$, where $\langle \cdot, \cdot \rangle$ is the inner product in $\g{p}$ induced by its identification with $T_o M$. Let $g\in \s  K_{\Sigma}$, where $\s K_{\Sigma}$ is the connected Lie subgroup of $\s G_{\Sigma}$ with Lie algebra $\g{k}_{\Sigma}$. Then, $g=\prod_{i=1}^k g_{\sigma(i)}$ for some $g_{\sigma(i)}\in \pi_i \s K_{\Sigma}$, where $\pi_i\colon \s G_1\times\cdots\times \s G_r\rightarrow \s G_i$ is the  projection onto the $i$-th factor. Thus, for any $X,Y\in\g{p}_{\Sigma}$,
	\begin{align*}
	(\Ad(g) X, \Ad(g)Y)_i&=\langle \proj_{\sigma(i)} \Ad(g) X,\proj_{\sigma(i)} \Ad(g) Y\rangle\\
	  &=\langle\proj_{\sigma(i)} \Ad(g_{\sigma(i)}) X,\proj_{\sigma(i)} \Ad(g_{\sigma(i)}) Y  \rangle\\
	  &=\langle  \Ad(g_{\sigma(i)})\proj_{\sigma(i)} X, \Ad(g_{\sigma(i)})\proj_{\sigma(i)}  Y  \rangle\\
	  &=\langle \proj_{\sigma(i)} X,\proj_{\sigma(i)} Y\rangle=(X,Y)_i,
	\end{align*}
	where we have used that $\Ad(g_{\sigma(i)})$ is a linear isometry for $\langle\cdot,\cdot\rangle$ which leaves $\proj_{\sigma(i)}\g{p}_{\Sigma}$ invariant, since $g_{\sigma(i)}$ belongs to the isotropy of $N_{\sigma(i)}$ for each $i\in\{1,\ldots,k\}$.
	Hence, $(\cdot,\cdot)_i$ is a $\s K_{\Sigma}$-invariant inner product in $\g{p}_{\Sigma}$. Moreover, by Schur Lemma, since the isotropy representation of $\Sigma$ is irreducible by assumption, we have that  $\proj_{\sigma(i)}$ is a homothety between the Lie triple systems $\g{p}_{\Sigma}$ and $\proj_{\sigma(i)}\g{p}_{\Sigma}$ for each $i\in\{1,\ldots,k\}$. In addition to that, $\proj_{\sigma(i)}\colon \g{p}_{\Sigma}\rightarrow\proj_{\sigma(i)}\g{p}_{\Sigma}$, preserves the sectional curvature since it preserves the Lie bracket. Thus, by \cite[Theorem 1.9.2]{wolf}, we have that $\proj_{\sigma(i)}\colon \Sigma\rightarrow\proj_{\sigma(i)}\Sigma$ is an affine diffeomorphism since $\Sigma$ and $\proj_{\sigma(i)}\Sigma$ are simply connected. Now let $p\in \Sigma$, $\gamma$ be a path in $\Sigma$ joining $o$ and $p$ and $\widetilde{\gamma}:=\proj_{\sigma(i)}\gamma$. Let $P_{\gamma}$ and $P_{\widetilde{\gamma}}$ be the parallel transports along to $\gamma$ and $\widetilde{\gamma}$, respectively. Since $\proj_{\sigma(i)}$ is affine, we have $\proj_{{\sigma(i)}_{*p}}=P_{\widetilde{\gamma}}\circ \proj_{{\sigma(i)}_{*o}}\circ P^{-1}_{\gamma}$ for every $p\in M$. However, $P_{\gamma}$ and $P_{\widetilde{\gamma}}$ are isometries and $\proj_{{\sigma(i)}_{*o}}$ is a homothety since $\proj_{\sigma(i)}\colon \g{p}_{\Sigma}\rightarrow \proj_{\sigma(i)}\g{p}_{\Sigma}$ is a homothety. Thus, $\proj_{{\sigma(i)}_{*p}}$ is also a homothety for every $p\in M$ and it turns out that $\proj_i\colon \Sigma\rightarrow \proj_{\sigma(i)}\Sigma$ is a homothety. Consequently, we have proved \textit{iii)}.\qedhere
\end{proof} 
\begin{remark}
	\label{remark:homothety}
	Observe that this lemma admits a converse. Let $M=M_1\times\cdots\times M_r$, where each $M_i$ is an irreducible symmetric space of non-compact type. Let $\Sigma$ be a Riemannian manifold and consider the embedding $\Psi\colon \Sigma\rightarrow M$, $\Psi(p)=(\Psi_1(p),\ldots,\Psi_r(p))$, where each map $\Psi_j\colon\Sigma\rightarrow N_{j}$ is either a homothety or a constant map, and $N_{j}$ is any totally geodesic submanifold of $M_{j}$ where $j\in\{1,\ldots,r\}$. Then, $\Psi(\Sigma)$ is a totally geodesic submanifold of $N_{1}\times\cdots\times N_{r}$, since homotheties carry geodesics into geodesics. Therefore, $\Psi(\Sigma)$ is a totally geodesic submanifold of $M$. 
\end{remark}
\begin{remark}
\label{remark:k-diagonaltotgeod}
One important consequence of the previous result that deserves to be highlighted is the following. Let $\Sigma$ be an irreducible, non-flat, $r$-diagonal totally geodesic submanifold of $M=M_1\times\cdots \times M_r$, where $M_i$ is an irreducible symmetric space of non-compact type for each $i\in\{1,\ldots,r\}$. Then, with the usual notation, $\g{p}_{\Sigma}\subset \g{p}_1\oplus\cdots\oplus\g{p}_r$ is an $r$-diagonal Lie triple system, and we can define a  Lie algebra isomorphism $\Phi_i:=\proj_i\colon \g{g}_{\Sigma} \rightarrow \proj_i\g{g}_\Sigma$, which sends $\g{p}_{\Sigma}$ onto the Lie triple system $\proj_i\g{p}_\Sigma$ in $\g{p}_i$.
Therefore, $\g{p}_{\Sigma}=\{ \sum_{i=1}^r \varphi_i X: X\in \proj_1\g{p}_\Sigma\}$, where $\varphi_i:=\Phi_i\Phi^{-1}_1\colon \proj_1\g{g}_\Sigma\rightarrow \proj_i\g{g}_\Sigma$ is a Lie algebra isomorphism sending $\proj_1\g{p}_\Sigma$ onto $\proj_i\g{p}_\Sigma$ for each $i\in\{1,\ldots,r\}$. Let $s\in\{1,\ldots,r\}$. Notice that $\g{p}_{\Sigma^s}=\{\sum_{i=1}^s \varphi_i X: X\in\proj_1\g{p}_\Sigma \}$ is an $s$-diagonal Lie triple system in $\g{p}$ such that $\g{g}_{\Sigma^s}$ is isomorphic to $\g{g}_{\Sigma}$ and then $\Sigma^s$, the  totally geodesic submanifold of $M$ corresponding to $\g{p}_{\Sigma^s}$, is homothetic to $\Sigma$.
\end{remark}

\begin{proposition}
	\label{proposition:compactnoncompact}
	Let $M_1$ and $M_2$ be irreducible symmetric spaces of compact and non-compact type, respectively. If  $\Sigma\subset M_1\times M_2$ is a diagonal totally geodesic submanifold, then  $\Sigma$ is flat.
\end{proposition}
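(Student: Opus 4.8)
The plan is to reduce to the case that $\Sigma$ is irreducible and then derive a contradiction from the impossibility of a symmetric space being simultaneously of compact and of non-compact type. Since $M=M_1\times M_2$ has precisely two irreducible de Rham factors, a diagonal $\Sigma$ must be $2$-diagonal; writing $\g{p}_\Sigma\subset\g{p}=\g{p}_1\oplus\g{p}_2$ for its Lie triple system, this means $\Ker(\proj_i|_{\g{p}_\Sigma})=0$ for $i=1,2$. To prove that $\Sigma$ is flat, I would write $\g{p}_\Sigma$ as the orthogonal sum of its (abelian) flat part and a family of irreducible Lie triple systems; each such irreducible summand $\g{p}'$ lies inside $\g{p}_\Sigma$, hence is again $2$-diagonal, and determines a totally geodesic submanifold of $\Sigma$, and thus of $M$. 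Since a non-flat $\Sigma$ would have at least one such summand, it suffices to rule out the existence of an irreducible, non-flat, $2$-diagonal totally geodesic submanifold $\Sigma'$ of $M_1\times M_2$.

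So suppose $\Sigma'$ is such a submanifold, with $\g{g}_{\Sigma'}:=\g{p}_{\Sigma'}\oplus[\g{p}_{\Sigma'},\g{p}_{\Sigma'}]\subset\g{g}=\g{g}_1\oplus\g{g}_2$, which is simple because $\Sigma'$ is irreducible and non-flat. For each $i$, with $\{i,j\}=\{1,2\}$, the kernel $\Ker(\proj_i|_{\g{g}_{\Sigma'}})=\g{g}_{\Sigma'}\cap\g{g}_j$ is an ideal of the simple algebra $\g{g}_{\Sigma'}$; it is proper since $\proj_i|_{\g{p}_{\Sigma'}}\neq0$ (as $\Sigma'$ is $2$-diagonal), hence it is zero. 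Therefore $\proj_i$ restricts to a Lie algebra isomorphism from $\g{g}_{\Sigma'}$ onto $\g{g}^i_{\Sigma'}:=\proj_i\g{p}_{\Sigma'}\oplus[\proj_i\g{p}_{\Sigma'},\proj_i\g{p}_{\Sigma'}]$, the Lie algebra associated with the totally geodesic submanifold $N_i:=\proj_i\Sigma'$ of $M_i$ (Lemma~\ref{lemma:projtriple}); since $\proj_i$ is a homomorphism we have $\proj_i[\g{p}_{\Sigma'},\g{p}_{\Sigma'}]=[\proj_i\g{p}_{\Sigma'},\proj_i\g{p}_{\Sigma'}]$, so this isomorphism also sends $\g{p}_{\Sigma'}$ onto $\proj_i\g{p}_{\Sigma'}$, and $N_i$ is irreducible and non-flat because $\g{g}_{\Sigma'}$ is simple.

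The contradiction now appears. As $N_1$ is a non-flat irreducible totally geodesic submanifold of the compact type space $M_1$, it is of compact type, and through the Cartan-decomposition-preserving isomorphism $\proj_1$ this forces $\Sigma'$ to be of compact type. But $N_2$ is a non-flat irreducible totally geodesic submanifold of the non-compact type space $M_2$, hence of non-compact type, and through $\proj_2$ this forces $\Sigma'$ to be of non-compact type. A symmetric space cannot be of both types (equivalently, the Killing form of $\g{g}_{\Sigma'}$ cannot be both negative and positive definite on the nonzero space $\g{p}_{\Sigma'}$), so no such $\Sigma'$ exists, and hence $\Sigma$ is flat. The conceptual core is the reduction to the irreducible case together with the observation that the two projections realize $\g{g}_{\Sigma'}$ as the Lie algebra of totally geodesic submanifolds of both factors; once that is in place the opposite types of $M_1$ and $M_2$ finish the argument, and the remaining verifications — that $\proj_i$ intertwines the two Cartan decompositions, and that a non-flat irreducible totally geodesic submanifold of a symmetric space of compact (resp. non-compact) type is itself of compact (resp. non-compact) type — are routine.
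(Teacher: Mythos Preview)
Your approach is the same as the paper's: reduce to an irreducible non-flat de Rham factor $\Sigma'$ of $\Sigma$, show that both projections $\proj_i$ are injective on $\g{g}_{\Sigma'}$, and then obtain a contradiction because $\g{g}_{\Sigma'}$ would be isomorphic both to a compact Lie algebra (inside $\g{g}_1$) and to a non-compact one (inside $\g{g}_2$).

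There is, however, one genuine gap. You assert that $\g{g}_{\Sigma'}$ is simple ``because $\Sigma'$ is irreducible and non-flat''. This fails precisely when $\Sigma'$ is an irreducible symmetric space of group type, i.e.\ a compact simple Lie group $\s H$ with bi-invariant metric: then $\g{g}_{\Sigma'}\cong\g{h}\oplus\g{h}$ is semisimple but not simple, so the proper ideals $\g{h}\oplus 0$ and $0\oplus\g{h}$ are available as potential kernels of $\proj_i|_{\g{g}_{\Sigma'}}$, and your argument for injectivity breaks down. The paper is aware of this subtlety and treats the group-type case separately.

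The repair is short. Observe that $\Ker(\proj_i|_{\g{g}_{\Sigma'}})=\g{g}_{\Sigma'}\cap\g{g}_j$ (with $\{i,j\}=\{1,2\}$) is invariant under the ambient Cartan involution $\theta$, since both $\g{g}_{\Sigma'}=\g{k}_{\Sigma'}\oplus\g{p}_{\Sigma'}$ and $\g{g}_j=\g{k}_j\oplus\g{p}_j$ are $\theta$-stable. Hence the kernel decomposes as $(\Ker\cap\g{k}_{\Sigma'})\oplus(\Ker\cap\g{p}_{\Sigma'})$; the second summand vanishes by $2$-diagonality, so the kernel lies in $\g{k}_{\Sigma'}$. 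In the group case the two simple ideals of $\g{h}\oplus\g{h}$ are interchanged by $\theta$, so neither is $\theta$-invariant, and the only $\theta$-invariant ideals are $0$ and $\g{g}_{\Sigma'}$; thus $\Ker(\proj_i|_{\g{g}_{\Sigma'}})=0$ after all. With this correction your argument goes through unchanged and matches the paper's.
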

\begin{proof}
	Let $\Sigma$ be a totally geodesic submanifold of $M_1\times M_2$. By De-Rham Theorem, we have that the universal covering of $\Sigma$ is $\widetilde{\Sigma}=\Sigma_0\times\Sigma_1\times\cdots\times\Sigma_s$, where $\Sigma_0$ is flat and each $\Sigma_i$ is an irreducible semisimple symmetric space. Moreover, $\g{p}_{\Sigma}=\bigoplus_{i=0}^s
	\g{p}_{\Sigma_i}$, where $\g{p}_{\Sigma_i}\subset \g{p}_{\Sigma} \subset\g{p}_1\oplus\g{p}_2$ is a Lie triple system corresponding to the irreducible symmetric space $\Sigma_i$, for each $i\in\{0,\ldots,s\}$, and $\g{p}_1,\g{p}_2$ are the Lie triple systems corresponding to $M_1$ and $M_2$, respectively. 
	
	Let us fix some $i\in\{1,\ldots,s\}$. Then  $\Sigma_i$ is semisimple, and we have that $\g{k}_{\Sigma_i}:=[\g{p}_{\Sigma_i},\g{p}_{\Sigma_i}]$ is the Lie algebra of the isotropy of $\Sigma_i$, and $\g{g}_{\Sigma_i}:=\g{k}_{\Sigma_i}\oplus\g{p}_{\Sigma_i}$ is the Lie algebra of the isometry group of $\Sigma_i$. Now, we define $\varphi_{i j}\colon \g{g}_{\Sigma_i}\rightarrow\g{g}_j$, where $\varphi_{i j} X=\proj_j X$ for each $i\in\{1,\ldots,s\}$ and $j\in\{1,2\}$, $\g{g}_j$ is the Lie algebra of the isometry group of $M_j$, and $\proj_j\colon \g{g}_1\oplus\g{g}_2\rightarrow \g{g}_j$ is the projection map.
	Notice that $\Ker \varphi_{ij\vert\g{p}_{\Sigma_i}}\subset \g{p}_{\Sigma_i}$ is an invariant subspace for the isotropy representation of $\Sigma_i$.  Since $\Sigma_i$ is irreducible, we have $\Ker \varphi_{ij\vert\g{p}_{\Sigma_i}}=0$ or $\Ker \varphi_{ij\vert\g{p}_{\Sigma_i}}=\g{p}_{\Sigma_i}$. Moreover, as $\g{p}_{\Sigma_i}$ is diagonal by assumption, $\Ker \varphi_{ij\vert\g{p}_{\Sigma_i}}=0$ for every $j\in\{1,2\}$. On the one hand, if $\Sigma_i$ is a compact simple Lie group, $\g{k}_{\Sigma_i}$ is simple, and hence, $\Ker\varphi_{ij\vert\g{k}_{\Sigma_i}}=0$ or $\Ker\varphi_{ij\vert\g{k}_{\Sigma_i}}=\g{k}_{\Sigma_i}$, since $\Ker\varphi_{ij\vert\g{k}_{\Sigma_i}}$ is an ideal of $\g{k}_{\Sigma_i}$. In any other case, $\g{g}_{\Sigma_i}$ is simple. Then, we have $\Ker\varphi_{ij}=0$ or $\Ker\varphi_{ij}=\g{g}_{\Sigma_i}$. However, $\Ker\varphi_{ij\vert\g{p}_{\Sigma_i}}=0$ and this implies that $\Ker\varphi_{ij}=0$.
	
	To sum up, for each $j\in\{1,2\}$, we have  $\Ker\varphi_{ij}=0$ or $\Ker\varphi_{ij}=\g{k}_{\Sigma_i}$. Let us assume that $\Ker\varphi_{ij}=\g{k}_{\Sigma_i}$ for some $j\in\{1,2\}$. In this case $\varphi_{ij}\g{g}_{\Sigma_i}=\varphi_{ij}\g{p}_{\Sigma_i}$ is an abelian Lie algebra. Since $\g{g}_{\Sigma_i}$ is semisimple and $\g{g}_{\Sigma_i}\subset \varphi_{ij}\g{g}_{\Sigma_i}\oplus\varphi_{ik}\g{g}_{\Sigma_i}$, where $k\in\{1,2\}\setminus\{j\}$,  we have  
	\[\g{g}_{\Sigma_i}=[\g{g}_{\Sigma_i},\g{g}_{\Sigma_i}]\subset[\varphi_{ij}\g{g}_{\Sigma_i}\oplus\varphi_{ik}\g{g}_{\Sigma_i},\varphi_{ij}\g{g}_{\Sigma_i}\oplus\varphi_{ik}\g{g}_{\Sigma_i}]\subset[\varphi_{ik}\g{g}_{\Sigma_i},\varphi_{ik}\g{g}_{\Sigma_i}]\subset\varphi_{ik}\g{g}_{\Sigma_i}\subset \g{g}_k.\]
	Therefore, we obtain a contradiction with the assumption that $\g{p}_{\Sigma}$ is diagonal.
	Now let us assume that $\Ker\varphi_{ij}=0$ for every $j\in\{1,2\}$. This implies that $\g{g}_{\Sigma_i}$ and $\varphi_{ij}\g{g}_{\Sigma_i}$ are isomorphic  for every $j\in\{1,2\}$. In particular, $\varphi_{ij}\g{g}_{\Sigma_i}$ is simple for every $i\in\{1,\ldots,s\}$ and $j\in\{1,2\}$, and $\varphi_{i1}\g{g}_{\Sigma_i}$ is isomorphic to $\varphi_{i2}\g{g}_{\Sigma_i}$. Now,  as $\varphi_{i1}\g{g}_{\Sigma_i}$ is a subalgebra of $\g{g}_1$, we have that $\varphi_{i1}\g{g}_{\Sigma_i}$ is a compact Lie algebra. Moreover, $\varphi_{i2}\g{g}_{\Sigma_i}=\varphi_{i2}\g{p}_{\Sigma_i}\oplus[\varphi_{i2}\g{p}_{\Sigma_i},\varphi_{i2}\g{p}_{\Sigma_i}]$, where $\varphi_{i2}\g{p}_{\Sigma_i}$ is a Lie triple system, is not a compact Lie algebra since it is simple and then it is the Lie algebra of the isometry group of an irreducible symmetric space of non-compact type (see discussion above Lemma~\ref{lemma:kblock}). 
	
	Consequently, we obtain a contradiction with the existence of (non-trivial) irreducible semisimple factors of $\Sigma$, which yields our result.
\end{proof}

\begin{proposition}
	\label{prop:congruency}
	Let $\Sigma_1,\Sigma_2$ be $r$-diagonal, non-flat, irreducible, totally geodesic submanifolds in $M=M_1\times\cdots\times M_r$, where each $M_i$ is an irreducible symmetric space of non-compact type homothetic to both $\Sigma_1$ and $\Sigma_2$. Then, there is $g\in\Isom(M_1)\times\cdots\times\Isom(M_r)\leq\Isom(M)$ such that $g\Sigma_1=\Sigma_2$.
\end{proposition}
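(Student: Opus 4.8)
The plan is to work entirely at the level of Lie triple systems and to build $g$ factor by factor out of Lie algebra automorphisms of the $\g{g}_i$. First I would use the homogeneity of each $M_i$: choosing for each $a\in\{1,2\}$ a translation $t_a\in\Isom(M_1)\times\cdots\times\Isom(M_r)$ carrying a point of $\Sigma_a$ to $o=(o_1,\dots,o_r)$ and replacing $\Sigma_a$ by $t_a\Sigma_a$, it suffices to treat the case $o\in\Sigma_1\cap\Sigma_2$ and to find $g$ fixing $o$ with $g\Sigma_1=\Sigma_2$ (the general $g$ is then recovered by conjugating with $t_1,t_2$, which stay in the product group); equivalently, writing $\g{p}_{\Sigma_a}\subset\g{p}=\bigoplus_i\g{p}_i$ for the Lie triple system of $\Sigma_a$, to produce $g$ with $g_{*o}\,\g{p}_{\Sigma_1}=\g{p}_{\Sigma_2}$. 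The key preliminary observation is that $\proj_i\Sigma_a=M_i$ for all $i$ and $a$: by Lemma~\ref{lemma:projtriple}, $\proj_i\Sigma_a$ is totally geodesic in $M_i$, and by Lemma~\ref{lemma:kblock}(iii) (here $k=r$ since $\Sigma_a$ is $r$-diagonal) the map $\proj_i\colon\Sigma_a\to\proj_i\Sigma_a$ is a homothety; as $\Sigma_a$ is homothetic to $M_i$ by hypothesis, $\proj_i\Sigma_a$ is a complete totally geodesic submanifold of the same dimension as the connected manifold $M_i$, hence equals $M_i$. Consequently $\proj_i\g{p}_{\Sigma_a}=\g{p}_i$ and $\proj_i\g{g}_{\Sigma_a}=\g{g}_i$ for all $i,a$.

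Next I would apply Remark~\ref{remark:k-diagonaltotgeod} to the $r$-diagonal submanifold $\Sigma_a$: it produces Lie algebra isomorphisms $\varphi_i^{(a)}\colon\g{g}_1\to\g{g}_i$ with $\varphi_1^{(a)}=\Id$ and $\varphi_i^{(a)}(\g{p}_1)=\g{p}_i$, such that $\g{p}_{\Sigma_a}=\{\sum_{i=1}^r\varphi_i^{(a)}X:X\in\g{p}_1\}$. Define $\psi_i:=\varphi_i^{(2)}\circ(\varphi_i^{(1)})^{-1}\in\Aut(\g{g}_i)$; then $\psi_1=\Id$, $\psi_i(\g{p}_i)=\g{p}_i$, and therefore $\psi_i(\g{k}_i)=\psi_i([\g{p}_i,\g{p}_i])=\g{k}_i$, so $\psi_i$ preserves the Cartan decomposition of $\g{g}_i$ and restricts to a linear isometry of $(\g{p}_i,\langle\cdot,\cdot\rangle)$. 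By the standard identification of the isotropy group $\Isom(M_i)_{o_i}$ with the group of automorphisms of $\g{g}_i$ preserving the Cartan decomposition, there is $g_i\in\Isom(M_i)$ fixing $o_i$ with $(g_i)_{*o_i}=\psi_i|_{\g{p}_i}$, and $g_1=\Id$. Setting $g:=(g_1,\dots,g_r)$, the map $g$ fixes $o$ with $g_{*o}=\bigoplus_i\psi_i|_{\g{p}_i}$, so $g_{*o}\bigl(\sum_i\varphi_i^{(1)}X\bigr)=\sum_i\psi_i\varphi_i^{(1)}X=\sum_i\varphi_i^{(2)}X$; hence $g_{*o}\g{p}_{\Sigma_1}=\g{p}_{\Sigma_2}$ and $g\Sigma_1=\Sigma_2$, as desired.

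The step I expect to require the most care is the passage from the algebraic automorphism $\psi_i$ to an actual isometry of $M_i$: one must invoke that the full isotropy group of a (simply connected) symmetric space of non-compact type realizes \emph{every} automorphism of its Lie algebra that preserves the Cartan decomposition, so that $\psi_i$ is genuinely geometric. A small accompanying point is that $\psi_i|_{\g{p}_i}$ is automatically an isometry and not merely a homothety, because any automorphism of $\g{g}_i$ preserves $\mathcal{B}_i$, hence $\mathcal{B}_i|_{\g{p}_i\times\g{p}_i}$, and the metric on $\g{p}_i$ is a positive multiple of it; in effect the homothety constants hidden in $\varphi_i^{(1)}$ and $\varphi_i^{(2)}$ cancel. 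Everything else is bookkeeping with Lemma~\ref{lemma:projtriple}, Lemma~\ref{lemma:kblock} and Remark~\ref{remark:k-diagonaltotgeod}.
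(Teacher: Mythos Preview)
Your proposal is correct and follows essentially the same route as the paper: parametrize each $\g{p}_{\Sigma_a}$ via Remark~\ref{remark:k-diagonaltotgeod} by isomorphisms $\varphi_i^{(a)}\colon\g{g}_1\to\g{g}_i$, form $\psi_i=\varphi_i^{(2)}\circ(\varphi_i^{(1)})^{-1}\in\Aut(\g{g}_i)$, and realize each $\psi_i$ by an isometry of $M_i$ fixing $o_i$. The paper phrases the last step as ``$\psi_i|_{\g{p}_i}$ is a linear isometry preserving the curvature tensor, hence extends to an isometry'' (citing Wolf), which is equivalent to your identification of the full isotropy with $\Aut(\g{g}_i)$ preserving the Cartan decomposition; your explicit verification that $\proj_i\Sigma_a=M_i$ and your initial reduction to $o\in\Sigma_1\cap\Sigma_2$ are points the paper leaves implicit.
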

\begin{proof}
	Let $\g{p}_{\Sigma_1}=\{ \sum_{i=1}^{k} \varphi_i X : X\in \g{p}_1  \}$, $\g{p}_{\Sigma_2}=\{ \sum_{i=1}^{k} \psi_i X : X\in \g{p}_1   \}$ and $\g{g}_{\Sigma_j}:=\g{p}_{\Sigma_j}\oplus[\g{p}_{\Sigma_j},\g{p}_{\Sigma_j}]$, for $j\in\{1,2\}$, where $\varphi_i,\psi_i\colon\g{g}_1\rightarrow\g{g}_i$ are Lie algebra isomorphisms sending $\g{p}_1$ onto $\g{p}_i$ for $i\in\{2,\ldots,r\}$, and $\varphi_1=\psi_1=\Id_{\g{g}_1}$, where $\Id_{\g{g}_1}$ is the identity map of $\g{g}_1$. Here we are using Remark \ref{remark:k-diagonaltotgeod} along with the assumption that each $M_i$ is an irreducible symmetric space of non-compact type homothetic to $\Sigma_1$ and $\Sigma_2$.
	
	Let  $\sigma_i:=\psi_i\varphi^{-1}_i\in \Aut(\g{g}_i)$. First of all, observe that $\sigma_{i\vert_{\g{p}_i}}$ is a linear isometry of $\g{p}_i$ since $\sigma_i\in\Aut(\g{g}_i)$ and the inner product on $\g{p}_i$ is the restriction of the Killing form  of $\g{g}_i$, up to scaling. Furthermore, $\sigma_{i\vert_{\g{p}_i}}$ preserves the curvature tensor of $M_i$ at $o_i$ since this is given by Lie brackets. Hence, $\sigma_{i\vert_{\g{p}_i}}$ is a linear isometry of $\g{p}_i$ that preserves sectional curvature at $o_i$. Thus, by \cite[Corollary 2.3.14]{wolf}, $\sigma_{i\vert_{\g{p}_i}}$ extends to an isometry $g_i\in \Isom(M_i)$ that fixes $o_i\in M_i$, since it leaves $\g{p}_i$ invariant. Then, $\sigma_i=\Ad(g_i)$ and if $g:=\prod_{i=1}^r g_i$, where $g_1$ is the identity element of $\Isom(M_1)$, we obtain
	\begin{align*}
	\Ad(g)^{-1}\g{g}_{\Sigma_2}&=\prod_{i=1}^r\Ad(g_i)^{-1}\g{g}_{\Sigma_2}=\bigg\{\sum_{i=1}^r\Ad(g_i)^{-1}\psi_i X: X\in \g{g}_1\bigg\}=\bigg\{\sum_{i=1}^r \varphi_i X: X\in \g{g}_1\bigg\}\\
	&=\g{g}_{\Sigma_1},
	\end{align*}
	where we have used  $\sigma_i=\psi_i\varphi^{-1}_i$ and  $\Ad(g_i)_{|\mathfrak{g}_j}=\Id_{\mathfrak{g}_j}$ for $i\neq j$.
	Therefore, there is a $g\in\Isom(M_1)\times\cdots\times\Isom(M_r)\leq\Isom(M)$ such that $g \s G_{\Sigma_1}= \s G_{\Sigma_2} g$. Consequently, $g \Sigma_1=g \s G_{\Sigma_1}\cdot o=\s G_{\Sigma_2} g\cdot o=\s G_{\Sigma_2} \cdot o=\Sigma_2$, since $g$ fixes $o$.\qedhere
\end{proof}

\begin{lemma}
	\label{lemma:proj}
	Let $\Sigma$ be a totally geodesic submanifold of $ M=M_1\times \cdots \times M_r$, where $M_i$ is a symmetric space of non-compact type and rank one, for each $i\in\{1,\ldots,r\}$. Moreover, let $\Sigma=\Sigma_1\times\Sigma_2$, where $\Sigma_1$ is irreducible and not flat. Then, if $\proj_i \Sigma_1$ and $\proj_j \Sigma_2$ have positive dimension, we have that $i\neq j$.
\end{lemma}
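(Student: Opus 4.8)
The plan is to argue by contradiction: suppose there is an index $i$ for which both $\proj_i\Sigma_1$ and $\proj_i\Sigma_2$ have positive dimension, and derive a contradiction with $\rank M_i=1$. Write $\g{p}_{\Sigma}$, $\g{p}_{\Sigma_1}$, $\g{p}_{\Sigma_2}$ for the corresponding Lie triple systems and $\g{g}_{\Sigma_1}=\g{p}_{\Sigma_1}\oplus[\g{p}_{\Sigma_1},\g{p}_{\Sigma_1}]$, and recall that each projection $\proj_i\colon\g{g}=\bigoplus_j\g{g}_j\to\g{g}_i$ is a Lie algebra homomorphism, being the projection onto an ideal.

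First I would record two structural inputs. (a) Since $\Sigma=\Sigma_1\times\Sigma_2$ is a Riemannian product, $\g{p}_{\Sigma}=\g{p}_{\Sigma_1}\oplus\g{p}_{\Sigma_2}$ orthogonally and $[\g{p}_{\Sigma_1},\g{p}_{\Sigma_2}]=0$; a self-contained way to see the bracket relation is to note that for linearly independent $X\in\g{p}_{\Sigma_1}$ and $Y\in\g{p}_{\Sigma_2}$ the plane $\spann\{X,Y\}$ has vanishing sectional curvature in $\Sigma$, hence in $M$ as $\Sigma$ is totally geodesic, so $0=\langle R_o(X,Y)Y,X\rangle=\mathcal{B}([X,Y],[X,Y])$ by the formula $R_o(X,Y)Z=-[[X,Y],Z]$, and since $M$ is of non-compact type $\mathcal{B}$ is negative definite on $\g{k}$, forcing $[X,Y]=0$ (the linearly dependent case is trivial as $\g{p}_{\Sigma_1}\cap\g{p}_{\Sigma_2}=0$). (b) Since $\Sigma_1$ is irreducible and non-flat and $M$ is of non-compact type, $\g{g}_{\Sigma_1}$ is a simple Lie algebra and $\dim\g{p}_{\Sigma_1}\ge 2$.

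Then I would combine these. If $\proj_i\Sigma_1$ has positive dimension then $\proj_i\g{p}_{\Sigma_1}\neq 0$, so the kernel of $\proj_i\colon\g{g}_{\Sigma_1}\to\g{g}_i$ is a proper ideal of the simple algebra $\g{g}_{\Sigma_1}$, hence $0$; therefore $\proj_i$ is injective on $\g{p}_{\Sigma_1}$ and $\dim\proj_i\g{p}_{\Sigma_1}=\dim\g{p}_{\Sigma_1}\ge 2$. (Alternatively this follows from Lemma~\ref{lemma:kblock}: the nonconstant projections of the irreducible $\Sigma_1$ are homotheties onto their images.) Applying the homomorphism $\proj_i$ to the relation $[\g{p}_{\Sigma_1},\g{p}_{\Sigma_2}]=0$ gives $[\proj_i\g{p}_{\Sigma_1},\proj_i\g{p}_{\Sigma_2}]=0$ in $\g{g}_i$. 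The rank-one hypothesis now yields the contradiction: choose $0\neq v\in\proj_i\g{p}_{\Sigma_2}$; since every maximal abelian subspace of $\g{p}_i$ has dimension $\rank M_i=1$, the line $\R v$ is a maximal abelian subspace of $\g{p}_i$, and for any $u\in\proj_i\g{p}_{\Sigma_1}$ we have $[u,v]=0$, so $\R u+\R v$ is abelian and maximality forces $u\in\R v$. Hence $\proj_i\g{p}_{\Sigma_1}\subseteq\R v$ is at most one-dimensional, contradicting $\dim\proj_i\g{p}_{\Sigma_1}\ge 2$.

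I do not anticipate a real obstacle: the whole point is that in a rank-one factor the centralizer in $\g{p}_i$ of a nonzero vector is a line, which is incompatible with the at-least-two-dimensional (and injectively projected) $\g{p}_{\Sigma_1}$ commuting with a nonzero vector coming from $\g{p}_{\Sigma_2}$. The only items that need a moment's care are the two structural inputs (a) and (b) above, namely that the product structure of $\Sigma$ forces the two Lie triple systems to commute and that irreducibility of $\Sigma_1$ over a non-compact-type ambient makes $\g{g}_{\Sigma_1}$ simple.
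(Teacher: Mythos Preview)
Your proof is correct and follows essentially the same route as the paper: both argue by contradiction, use that $\dim\proj_i\g{p}_{\Sigma_1}\ge 2$ (the paper cites Lemma~\ref{lemma:kblock}, you invoke simplicity of $\g{g}_{\Sigma_1}$), and exploit $[\proj_i\g{p}_{\Sigma_1},\proj_i\g{p}_{\Sigma_2}]=\proj_i[\g{p}_{\Sigma_1},\g{p}_{\Sigma_2}]=0$. The only cosmetic difference is the final contradiction: the paper observes that negative sectional curvature of $M_i$ forces $[X,Y]\neq 0$ for any $2$-plane in $\g{p}_i$, whereas you phrase the same fact as ``maximal abelian subspaces of $\g{p}_i$ are one-dimensional''---these are equivalent for a rank-one space of non-compact type.
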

\begin{proof}
	Let $\Sigma=\Sigma_1\times\Sigma_2$ be a totally geodesic submanifold of $M$, where $\Sigma_1$ is irreducible and not flat.
	Let $\g{p}_{\Sigma}=\g{p}_{\Sigma_1}\oplus\g{p}_{\Sigma_2}\subset \g{p}=\bigoplus_{i=1}^r \g{p}_i$ be the corresponding Lie triple system. Fix some $i\in\{1,\ldots,r\}$. Let us suppose that $\widehat{\g{p}}_j:=\proj_i \g{p}_{\Sigma_j}$ has positive dimension for both $j=1$ and $j=2$. By Lemma \ref{lemma:kblock}, we have that $\dim \widehat{\g{p}}_1>1$, since $\Sigma_1$ is irreducible and not flat. Thus, we can choose $X\in\widehat{\g{p}}_1$ and $Y\in \widehat{\g{p}}_2$ spanning a 2-plane in $\g{p}_i$. Moreover, the sectional curvature $\sec$ of $M_i$ is given by
	\[  \sec(X,Y)=-\frac{\langle [[X,Y],Y],X\rangle}{\langle X,X\rangle\langle Y,Y\rangle- \langle X,Y\rangle^2}.   \]
	In particular,  $[\widehat{\g{p}}_1,\widehat{\g{p}}_2]\neq 0$, since $M_i$ has negative sectional curvature. However, we have  
	$[\widehat{\g{p}}_1,\widehat{\g{p}}_2]=[\proj_i \g{p}_{\Sigma_1},\proj_i\g{p}_{\Sigma_2}]=\proj_i[\g{p}_{\Sigma_1},\g{p}_{\Sigma_2}]=0$, since $\Sigma$ is a Riemannian product of the symmetric spaces $\Sigma_1$ and $\Sigma_2$. Therefore, we obtain a contradiction with the assumption that both $\widehat{\g{p}}_1$ and $\widehat{\g{p}}_2$ have positive dimension.\qedhere
\end{proof}
\begin{remark}
\label{remark:bigrk}
It is important to notice that the previous lemma is not true when the ambient space is a product of irreducible symmetric spaces of rank greater than one. For instance, one can find a totally geodesic submanifold $\Sigma$ homothetic to $\R \s H^2\times \R \s H^2$ in $M=M_1\times M_2$, with $M_i=\mathsf{SO}_{2,4}/(\mathsf{SO}_2\times \mathsf{SO}_4)$ for each $i\in\{1,2\}$, such that both factors of $\Sigma$ have non-trivial projection onto each factor of $M$. Thus, applying \cite[Theorem 4.1 and \S 5]{kleindga} and duality, there is a totally geodesic submanifold $\Sigma^1_i\times\Sigma^2_i\subset M_i$, where $\Sigma^1_i$ and $\Sigma^2_i$ are mutually isometric real hyperbolic planes, for each $i\in\{1,2\}$. Now we can consider $\widehat{\Sigma}^j$ a $2$-diagonal totally geodesic submanifold in $\Sigma^j_1\times\Sigma^j_2$ for each $j\in\{1,2\}$. Thus, $\Sigma:=\widehat{\Sigma}^1\times\widehat{\Sigma}^2$ is a totally geodesic submanifold homothetic to $\R\s H^2\times\R \s H^2$ in $M$ such that both factors of $\Sigma$ have non-trivial projection onto each factor of $M$.
\end{remark}
We now introduce a notation that will be useful in what follows. Let $M$ and $\Sigma$ be two symmetric spaces. We will write  $(\Sigma)\leq M$ if $M$ contains a totally geodesic submanifold isometric to $\Sigma$.

A simply connected, reducible symmetric space $M$ of rank 2 is a product of two simply connected irreducible symmetric spaces of rank one, $M_1$ and $M_2$. Let $\Sigma$ be a totally geodesic submanifold in $M=M_1\times M_2$. Notice that if $\Sigma$ is reducible, then it has maximal rank and Proposition \ref{prop:maximalrank} implies that $\Sigma=\Sigma_1\times\Sigma_2$, where $\Sigma_i$ is a totally geodesic submanifold of $M_i$ for each $i\in\{1,2\}$. Now, if $\Sigma$ is irreducible, it must be either a geodesic or a semisimple totally geodesic submanifold. Let us assume that $\Sigma$ is an irreducible semisimple totally geodesic submanifold of $M$. Then, by Lemma \ref{lemma:kblock}~\textit{i}), $\Sigma$ is either $1$-diagonal or $2$-diagonal. If $\Sigma$ is $1$-diagonal,  clearly, $\Sigma=\Sigma_i\times \{p_j\}$, where $\Sigma_i$ is a totally geodesic submanifold of $M_i$ and $p_j\in M_j$ for distinct $i,j\in\{1,2\}$. 

Let us assume that $M_1$ is of compact type and $M_2$ is of non-compact type. By Proposition~\ref{proposition:compactnoncompact}, $M=M_1\times M_2$ has no  diagonal totally geodesic submanifolds of dimension greater than one. Hence, a $2$-diagonal totally geodesic submanifold is a geodesic. 

Let us assume that $M_1$ is flat and $M_2$ is of non-compact type.  We can suppose that $\Sigma$ is an irreducible semisimple $2$-diagonal totally geodesic submanifold. Thus, every non-zero vector in $\g{p}_{\Sigma}$ is of the form $X=X_1+X_2$, where $X_i\in\g{p}_i$ is a non-zero vector in $\g{p}_i$ for each $i\in\{1,2\}$. However, since $\Sigma$ is semisimple, then $\g{p}_{\Sigma}$ has dimension greater than one. Moreover, $\dim\proj_1\g{p}_\Sigma=1$. Hence, there exists some non-zero vector $X'$ in $\g{p}_{\Sigma}\cap\proj_2\g{p}_\Sigma$  contradicting the assumption that $\Sigma$ is $2$-diagonal.

To sum up the above discussion: if $M_1$ and $M_2$ have opposite types or one of them is flat, then every totally geodesic submanifold $\Sigma$ in $M=M_1\times M_2$ is either a geodesic or  equal to $\Sigma_1\times\Sigma_2$, where $\Sigma_i\subset M_i$ is a totally geodesic submanifold for each $i\in\{1,2\}$. In view of the argumentation above, by duality we will assume that both factors in $M$ are either of compact or of non-compact type. In this case, we have the following result.
\begin{theorem}
\label{th:rank2reducible}
Let $M_i:=\mathbb{F}_i\s H^{n_i}(c_i)$ be a symmetric space of non-compact type and rank one for $i=1,2$. Given positive numbers $c'_1,c'_2$, we define the quantity $c=\frac{c'_1c'_2}{c'_1+c'_2}$. Then,  $\Sigma\subset M_1 \times M_2$ is a  totally geodesic submanifold if and only if it is equal to one in the list below:
\begin{enumerate}[i)]
\item A geodesic in $M_1\times M_2$.	
\item A product $\Sigma_1\times \Sigma_2$, where $\Sigma_i\subset  M_i$ is a totally geodesic submanifold for $i\in\{1,2\}$.
\item A totally geodesic diagonal  $\mathbb{F}\s H^n(c)$, with  $\mathbb{F}\neq \mathbb{R}$ and $c'_i=c_i$,   whenever $(\mathbb{F}\s H^n(c_i))\leq M_i$ for every $i\in\{1,2\}$.
\item A totally geodesic diagonal $\mathbb{R}\s H^n(c)$, with  $c'_i~\in~\{c_i,\frac{c_i}{4}\}$, whenever $(\R \s H^n (c'_i))\leq M_i$ for every $i\in\{1,2\}$.
\end{enumerate}
\end{theorem}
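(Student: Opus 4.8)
The plan is to combine the structural results of the previous sections with the rank one classification recorded in Table~\ref{table:rankone} and a short curvature computation. First I dispose of the easy cases. If $\Sigma$ is reducible, it has rank at least two, hence rank exactly two since $\rank M=2$, and Proposition~\ref{prop:maximalrank} gives $\Sigma=\Sigma_1\times\Sigma_2$ with $\Sigma_i\subset M_i$ totally geodesic, which is item~ii); this also absorbs the case in which $\Sigma$ is flat of dimension two and, allowing a factor to be a point, the case in which $\Sigma$ is irreducible and $1$-diagonal. If $\Sigma$ is irreducible and one-dimensional it is a geodesic, item~i). Otherwise $\Sigma$ is irreducible and non-flat, hence semisimple of non-compact type, and by Lemma~\ref{lemma:kblock}~\textit{i)} it is $k$-diagonal with $k\in\{1,2\}$; the case $k=1$ means $\Sigma\subset M_i\times\{\mathrm{pt}\}$, again item~ii). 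Thus the whole content of the theorem lies in the $2$-diagonal case.

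So suppose $\Sigma$ is an irreducible, non-flat, $2$-diagonal totally geodesic submanifold of $M$. By Lemma~\ref{lemma:kblock}~\textit{ii)--iii)}, $N_i:=\proj_i\Sigma$ is an irreducible, non-flat totally geodesic submanifold of $M_i$, each $\proj_i\colon\Sigma\to N_i$ is a homothety and a diffeomorphism, and consequently $\Sigma$ is the graph in $N_1\times N_2\subset M$ of the homothety $\phi:=\proj_2\circ(\proj_1|_\Sigma)^{-1}\colon N_1\to N_2$. Being an irreducible, non-flat symmetric space of non-compact type and rank one, $\Sigma$ is isometric to some $\mathbb{F}\s H^n(c)$; since $\proj_i$ is a homothety onto $N_i$, we get $N_i=\mathbb{F}\s H^n(c_i')$ for some $c_i'>0$. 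Because $N_i$ is a non-flat totally geodesic submanifold of $M_i=\mathbb{F}_i\s H^{n_i}(c_i)$, inspection of Table~\ref{table:rankone} (together with the possibility $N_i=M_i$) shows that $c_i'=c_i$ when $\mathbb{F}\neq\R$ and $c_i'\in\{c_i,c_i/4\}$ when $\mathbb{F}=\R$.

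It remains to pin down $c$, and here I would compute the metric induced on the graph. Rescaling the metric of $\mathbb{F}\s H^n(c_1')$ by the factor $c_1'/c_2'$ turns its sectional curvature bounds into those of $\mathbb{F}\s H^n(c_2')$, so the homothety $\phi$ multiplies squared lengths by $c_1'/c_2'$. Identifying $\Sigma$ with $N_1$ via $\proj_1$, a tangent vector $X$ of $N_1$ then corresponds to $(X,\phi_*X)$, of squared length $|X|^2+\tfrac{c_1'}{c_2'}|X|^2=\tfrac{c_1'+c_2'}{c_2'}|X|^2$; hence $\Sigma$ carries the metric of $\mathbb{F}\s H^n(c_1')$ rescaled by $(c_1'+c_2')/c_2'$, so $\Sigma$ is isometric to $\mathbb{F}\s H^n(c)$ with $c=c_1'\cdot\tfrac{c_2'}{c_1'+c_2'}=\tfrac{c_1'c_2'}{c_1'+c_2'}$. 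This is item~iii) when $\mathbb{F}\neq\R$ and item~iv) when $\mathbb{F}=\R$. Conversely, given $N_1=\mathbb{F}\s H^n(c_1')\leq M_1$ and $N_2=\mathbb{F}\s H^n(c_2')\leq M_2$ as in iii) or iv), the graph in $N_1\times N_2\subset M$ of a homothety $N_1\to N_2$ is totally geodesic by Remark~\ref{remark:homothety} and, by the same computation, is isometric to $\mathbb{F}\s H^n\bigl(\tfrac{c_1'c_2'}{c_1'+c_2'}\bigr)$; since items i) and ii) are plainly totally geodesic, the proof is complete.

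The main obstacle I anticipate is the bookkeeping in the last paragraph: one has to be careful that a homothety between symmetric spaces really does rescale the metric by a single constant (this is precisely the content of Lemma~\ref{lemma:kblock}~\textit{iii)}, and ultimately of Schur's lemma) and that the curvature normalizations that occur as totally geodesic submanifolds of a rank one space are exactly those listed in Table~\ref{table:rankone} — this last point is where Wolf's rank one classification is genuinely used.
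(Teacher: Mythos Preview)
Your proof is correct and follows essentially the same route as the paper: the same case split via Proposition~\ref{prop:maximalrank} and Lemma~\ref{lemma:kblock}, the same appeal to Table~\ref{table:rankone} to identify the projections $N_i$, and then a curvature computation to determine $c$. The only cosmetic difference is that the paper computes the sectional curvature of the diagonal via the Lie-bracket formula $R(X,Y)Z=-[[X,Y],Z]$ on the Lie triple system (writing the ambient metric as $\lambda_1\langle\proj_1\cdot,\proj_1\cdot\rangle_1+\lambda_2\langle\proj_2\cdot,\proj_2\cdot\rangle_1$ with $\lambda_i=1/c_i$), whereas you obtain $c$ directly by observing that the graph metric is $\tfrac{c_1'+c_2'}{c_2'}$ times that of $N_1$; the two computations are equivalent.
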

\begin{remark}
The diagonal embeddings in items \textit{iii)} and \textit{iv)} are described in Lemma \ref{lemma:kblock}~\textit{iii)}. These are of the form $\Psi\colon \Sigma\rightarrow M_1\times M_2$, $p\in \Sigma\mapsto(\Psi_1(p),\Psi_2(p))$, where $\Psi_i$ is a  homothety between $\Sigma$ and some totally geodesic submanifold $N_i$ of $M_i$ for each $i\in\{1,2\}$ (see Remark~\ref{remark:homothety}).
\end{remark}
\begin{proof}
Let us assume that $\Sigma$ has rank two. Then, by Proposition~\ref{prop:maximalrank},  $\Sigma=\Sigma_1\times\Sigma_2$, where $\Sigma_i\subset M_i$ is totally geodesic for each $i=1,2$, which corresponds to item \textit{ii)} in the statement.

Now assume that $\Sigma$ has rank one, then it is either a geodesic, which corresponds to item \textit{i)}, or it is semisimple. In this latter case, $\Sigma$ must be isometric to $\mathbb{F}\s H^n(c)$ for some $\mathbb{F}\in\{\R,\C,\H,\O\}$ and $c>0$. If $\Sigma$ is not diagonal, it is 1-diagonal by Lemma \ref{lemma:kblock} and it must be congruent to $\Sigma_1 \times \{p_2\}$ or to $\{p_1\}\times \Sigma_2$, where $\Sigma_i\subset \mathbb{F}_i\s H^{n_i}(c_i)$ is a totally geodesic submanifold and $p_i\in M_i$ for $i=1,2$. This corresponds to item \textit{ii)} in the statement.
Moreover, if it is diagonal, it is $2$-diagonal. Then,  by Lemma \ref{lemma:kblock} and the classification of totally geodesic submanifolds in the rank one symmetric spaces (see Table \ref{table:rankone}), we have that $\Sigma\subset \mathbb{F}\s H^{n}(c'_1)\times \mathbb{F} \s H^{n}(c'_2)$, for some positive numbers $c'_1$ and $c'_2$.
Let us further assume that $\mathbb{F}\neq \R$. Hence, by the classification in rank one, $c'_i=c_i$ for $i=1,2$. We will prove that the sectional curvature of $\Sigma$ satisfies
$$ \sec(X,Y)\in\left[-\frac{c_1 c_2}{c_1+c_2},-\frac{c_1 c_2}{4(c_1+c_2)} \right],$$
for any $X,Y\in\g{p}_{\Sigma}$ spanning a $2$-plane.
Let  $\g{p}'_i:=\proj_i\g{p}_{\Sigma}$ be a Lie triple system associated with $T_{o_i}\mathbb{F}\s H^{n}(c_i)$, where $o_i\in \mathbb{F}\s H^{n}(c_i)$. Moreover, consider  the Lie algebra of the isometry group of $\mathbb{F}\s H^{n}(c_i)$, which is  $\g{g}'_i:=\g{p}'_i\oplus[\g{p}'_i,\g{p}'_i]$. Then, the Lie triple system corresponding to $\Sigma$ is $\g{p}_{\Sigma}=\{X_1 +\varphi X_1\colon X_1\in \g{p}'_1\}$ for some Lie algebra isomorphism $\varphi\colon \g{g}'_1\rightarrow \g{g}'_2$ that sends $\g{p}'_1$ onto $\g{p}'_2$ (see Remark \ref{remark:k-diagonaltotgeod}). Now let $\langle \cdot,\cdot\rangle_1$ be the  metric of $\mathbb{F}\s H^{n}(1)$, and $\sec_1(\cdot,\cdot)$ its sectional curvature. We can regard the induced metric of $\mathbb{F}\s H^{n}(c_i)$ on $\g{p}'_i$ as a positive multiple of $\langle \cdot,\cdot \rangle_1$. Hence, we can write the metric of $\Sigma$ at $(o_1,o_2)$ as
\[ \langle \cdot, \cdot\rangle :=\lambda_1 \langle \proj_1 \cdot, \proj_1 \cdot \rangle_1 + \lambda_2 \langle \proj_2 \cdot, \proj_2 \cdot \rangle_1,   \]
for some $\lambda_1,\lambda_2>0$.
Let $X=X_1 +\varphi X_1,Y=Y_1+\varphi Y_1\in \g{p}_{\Sigma}$, where $X_1,Y_1\in\g{p}'_1$ satisfy $\langle X_1,X_1\rangle_1=\langle Y_1, Y_1\rangle_1=1$ and $\langle X_1, Y_1\rangle_1=0$. Moreover, we have
\begin{align*}
\langle [[X,Y],Y],X\rangle&=\lambda_1 \langle [[X_1,Y_1],Y_1],X_1\rangle_1 + \lambda_2 \langle [[\varphi X_1,\varphi Y_1],\varphi Y_1],\varphi X_1\rangle_1\\
&=\lambda_1 \langle [[X_1,Y_1],Y_1],X_1\rangle_1 + \lambda_2 \langle \varphi[[X_1,Y_1],Y_1],\varphi X_1\rangle_1\\
&=-(\lambda_1+\lambda_2) \sec_1(X_1,Y_1),\\
\langle X, X\rangle&=\langle X_1 +\varphi X_1, X_1 +\varphi X_1\rangle=\lambda_1\langle X_1, X_1\rangle_1 + \lambda_2 \langle \varphi X_1,\varphi X_1\rangle_1=\lambda_1+\lambda_2,\\
\langle Y, Y\rangle&=\langle Y_1 +\varphi Y_1, Y_1 +\varphi Y_1\rangle=\lambda_1\langle Y_1, Y_1\rangle_1 + \lambda_2 \langle \varphi Y_1,\varphi Y_1\rangle_1=\lambda_1+\lambda_2,\\
\langle X, Y\rangle&=\langle X_1 +\varphi X_1, Y_1 +\varphi Y_1\rangle=\lambda_1\langle X_1, Y_1\rangle_1 + \lambda_2 \langle \varphi X_1,\varphi Y_1\rangle_1=0,
\end{align*} 
where we have used that $\varphi$ preserves $\langle\cdot,\cdot\rangle_1$ since $\varphi$ preserves the Killing form.
Thus, the sectional curvature at $(o_1,o_2)\in\Sigma$ of the 2-plane spanned by $\{X,Y\}$ is given by 
\[\sec(X,Y)=\frac{\sec_1(X_1,Y_1)}{\lambda_1+\lambda_2}=\frac{c_1c_2}{c_1+c_2}\sec_1(X_1,Y_1)\in \left[-\frac{c_1 c_2}{c_1+c_2},-\frac{c_1 c_2}{4(c_1+c_2)} \right],  \]
since $\lambda_i=1/c_i$ for each $i\in\{1,2\}$, because  $\lambda g$ has sectional curvature $\frac{1}{\lambda}\sec$, when $g$ is a Riemannian metric, $\sec$ its sectional curvature and $\lambda$ a positive number. Hence, $\Sigma$ must be isometric to a diagonal $\mathbb{F}\s H^n(\frac{c_1 c_2}{c_1+c_2})$,  whenever $(\mathbb{F}\s  H^n(c_i))\leq M_i$ for every $i\in\{1,2\}$ and $\mathbb{F}\neq \mathbb{R}$. This corresponds to item \textit{iii)} in the statement.

Now let us assume that $\mathbb{F}=\R$. Again, by Lemma \ref{lemma:kblock} and the classification of totally geodesic submanifolds in symmetric spaces of non-compact type and rank one (see Table~\ref{table:rankone}), we have $\Sigma\subset \mathbb{R}\s H^{n}(c'_1)\times \mathbb{R}\s H^{n}(c'_2)$, where  $c'_i\in\{c_i,c_i/4\}$ is such that $(\R\s H^{n}(c'_i))\leq M_i$ for every $i\in\{1,2\}$. A similar computation as above yields that the sectional curvature of $\Sigma$ is equal to $-\frac{c'_1 c'_2}{c'_1+c'_2}$. Then, $\Sigma$ is isometric to $\R\s  H^n(\frac{c'_1 c'_2}{c'_1+c'_2})$, which corresponds to item \textit{iv)} in the statement.  \qedhere

\end{proof}
\begin{remark}
	Notice that unlike in the irreducible rank one case, if $M$ is a reducible symmetric space of rank two then we can find mutually isometric diagonal totally geodesic submanifolds  $\Sigma_1$ and $\Sigma_2$ which are not congruent in $M$. This implies that the  hypothesis of $M_i$ being homothetic to $\Sigma_1$ and $\Sigma_2$ for each $i\in\{1,2\}$ in Proposition \ref{prop:congruency} is crucial. Let us consider $M=M_1\times M_2$, where $M_1=\C \s H^2$ and $M_2=\C\s  H^3$. Let us assume that there is some $\varphi\in\Isom(M)$ such that $\varphi \Sigma_1=\Sigma_2$ where
	\begin{align*}
	\Sigma_1&:=\R \s H^2(1/5)\subset L_1:=\R \s H^2 \times \R \s H^2(1/4)\subset \C \s H^2\times \C \s H^2\subset M_1\times M_2,\\
	\Sigma_2&:=\R \s H^2(1/5)\subset L_2:=\R \s H^2(1/4) \times \R \s H^2\subset \C \s H^2\times \C \s H^2\subset M_1\times M_2.
	\end{align*}
	Clearly, each isometry of $M_1\times M_2$ must preserve both factors since $M_1$ and $M_2$ are not isometric. Then, we have $\Sigma_2\subset \varphi L_1\cap L_2$. However, since $\R \s H^2$ and $\R \s H^2(1/4)$ are complex and totally real submanifolds in $\C \s H^2$, respectively, we have  $\varphi L_1\cap L_2\subset \R\times \R$, where $\R\times \R$ is a maximal totally geodesic flat submanifold of $M_1\times M_2$. Moreover, since the intersection of totally geodesic submanifolds is totally geodesic, this implies that $\Sigma_2\subset \varphi L_1\cap L_2\subset \R\times\R\subset M_1\times M_2$, which contradicts the fact that $\Sigma_2$ is not flat and proves that such $\varphi$ cannot exist.
\end{remark}

Let us recall that  the elementary symmetric polynomial $e_k$  of order $k\in\{0,\ldots,n\}$ in $n$ variables  is defined as $e_k(X_1,\ldots,X_n)=\sum_{1\leq i_1<\ldots<i_k\leq n} X_{i_1}\cdots X_{i_{k}}$. Then, we have the following generalization of Theorem \ref{th:rank2reducible} to the case of diagonal totally geodesic submanifolds in arbitrary products of symmetric spaces of rank one.
\begin{corollary}
\label{cor:sympolynomial}
Let $M= M_1\times\cdots\times M_r$, where each $ M_i=\mathbb{F}_i \s H^{n_i}(c_i)$ is a symmetric space of non-compact type and rank one for $i\in\{1,\ldots,r\}$. Given positive numbers $\{c'_i\}_{i=1}^r$, we define the quantity
\[c:=\frac{\prod_{i=1}^r c'_{i}}{e_{r-1}(c'_{1},\ldots,c'_{r})},   \]
where $e_{r-1}$ is the elementary symmetric polynomial of degree $r-1$ in $r$ variables.

Then, if  $\Sigma$ is a non-flat, irreducible, $r$-diagonal, totally geodesic submanifold in $M$, it is isometric to one in the list below:
\begin{enumerate}[i)]
	\item $\R \s H^n(c)$, with $c'_{i}\in\{c_{i},\frac{c_{i}}{4}\}$, whenever $(\R \s H^n(c'_{i}))\leq M_{i}$ for every $i\in\{1,\ldots,r\}$.
	\item $\mathbb{F}\s H^n(c)$, with $\mathbb{F}\neq \R$  and $c'_{i}=c_{i}$,  whenever $(\mathbb{F} \s H^n(c_{i}))\leq M_i$, for every $i\in\{1,\ldots,r\}$.
\end{enumerate}
\end{corollary}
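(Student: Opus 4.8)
The plan is to mirror the proof of Theorem~\ref{th:rank2reducible}, of which this is the $r$-factor generalization. Since $\Sigma$ is $r$-diagonal, non-flat and irreducible, Lemma~\ref{lemma:kblock} gives, after reindexing, that $\Sigma\subset N_1\times\cdots\times N_r$ with $N_i=\proj_i\Sigma$ a totally geodesic submanifold of $M_i$ and each $\proj_i\colon\Sigma\to N_i$ a homothety. A homothety is affine and rescales the metric by a constant, hence carries flat totally geodesic submanifolds to flat totally geodesic submanifolds; therefore $\rank\Sigma=\rank N_i\le\rank M_i=1$, and since $\Sigma$ is not flat we get $\rank\Sigma=1$, so $\Sigma$ is isometric to $\mathbb{F}\s H^n(c)$ for some $\mathbb{F}\in\{\R,\C,\H,\O\}$, some $n$, and some $c>0$. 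The remaining work is to identify $\mathbb{F}$, $n$ and the precise value of $c$.

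First I would identify the factors $N_i$. As $\proj_j\circ\proj_i^{-1}\colon N_i\to N_j$ is a homothety, the $N_i$ (and $\Sigma$) are mutually homothetic, so they all carry the same $\mathbb{F}$ and the same $n$ and none of them is flat or a point. By the classification of totally geodesic submanifolds in rank one symmetric spaces (Table~\ref{table:rankone}), $N_i$ is then isometric to $\mathbb{F}\s H^n(c'_i)$ with $c'_i=c_i$ when $\mathbb{F}\neq\R$, and $c'_i\in\{c_i,c_i/4\}$ when $\mathbb{F}=\R$ (the value $c_i/4$ occurring exactly when $N_i$ is a totally real $\R\s H^n$ sitting inside a non-real rank one factor), and in each case $(\mathbb{F}\s H^n(c'_i))\le M_i$; this is precisely the dichotomy of items~\textit{i)} and \textit{ii)}.

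Next I would compute the sectional curvature of $\Sigma$ exactly as in Theorem~\ref{th:rank2reducible}. By Remark~\ref{remark:k-diagonaltotgeod}, $\g{p}_\Sigma=\{\sum_{i=1}^r\varphi_iX:X\in\g{p}'_1\}$ where $\g{p}'_i:=\proj_i\g{p}_\Sigma$ and $\varphi_i\colon\g{g}'_1\to\g{g}'_i$ are Lie algebra isomorphisms carrying $\g{p}'_1$ onto $\g{p}'_i$ and preserving the Killing forms. Writing $\langle\cdot,\cdot\rangle_1$ and $\sec_1$ for the metric and sectional curvature of $\mathbb{F}\s H^n(1)$, the metric of $\Sigma$ at $o$ is $\langle\cdot,\cdot\rangle=\sum_{i=1}^r\lambda_i\langle\proj_i\cdot,\proj_i\cdot\rangle_1$ with $\lambda_i=1/c'_i$, because rescaling a metric by $\lambda$ rescales sectional curvature by $1/\lambda$ and $N_i\cong\mathbb{F}\s H^n(c'_i)$. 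For $X_1,Y_1\in\g{p}'_1$ orthonormal with respect to $\langle\cdot,\cdot\rangle_1$ and $X=\sum_i\varphi_iX_1$, $Y=\sum_i\varphi_iY_1$, the same term-by-term computation as in the rank-two case, using that each $\varphi_i$ preserves brackets and $\langle\cdot,\cdot\rangle_1$, gives
\[ \langle[[X,Y],Y],X\rangle=-\left(\sum_{i=1}^r\lambda_i\right)\sec_1(X_1,Y_1),\qquad \langle X,X\rangle=\langle Y,Y\rangle=\sum_{i=1}^r\lambda_i,\qquad \langle X,Y\rangle=0, \]
so that $\sec(X,Y)=\frac{\sec_1(X_1,Y_1)}{\sum_{i=1}^r\lambda_i}$.

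Finally I would invoke the elementary identity $\sum_{i=1}^r\lambda_i=\sum_{i=1}^r 1/c'_i=e_{r-1}(c'_1,\ldots,c'_r)/\prod_{i=1}^r c'_i=1/c$, which is exactly the defining formula for $c$; hence $\sec(X,Y)=c\,\sec_1(X_1,Y_1)$, which ranges over $[-c,-c/4]$ when $\mathbb{F}\neq\R$ and equals $-c$ when $\mathbb{F}=\R$. Comparing with the curvature normalization of $\mathbb{F}\s H^n(c)$ and recalling that $\Sigma$ is irreducible of rank one, we conclude that $\Sigma$ is isometric to $\mathbb{F}\s H^n(c)$, giving items~\textit{i)} and \textit{ii)}. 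I expect the only mildly delicate point to be the bookkeeping between the abstract metric of $\Sigma$ and the curvature normalizations of the $N_i\le M_i$, i.e.\ reading off the $c'_i$ correctly from Table~\ref{table:rankone} and justifying $\lambda_i=1/c'_i$; the rest is the rank-two curvature computation combined with the symmetric-polynomial identity.
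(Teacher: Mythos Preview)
Your argument is correct, but it proceeds differently from the paper. The paper proves the corollary by \emph{induction on $r$}: it projects $\Sigma$ onto $M_1\times\cdots\times M_{r-1}$ and onto $M_r$ to obtain $\widehat{\Sigma}$ and $\widehat{\Sigma}'$, applies the induction hypothesis to $\widehat{\Sigma}$ to get $\mathbb{F}\s H^n(\widetilde{c})$ with $\widetilde{c}=\prod_{i=1}^{r-1} c'_i/e_{r-2}(c'_1,\ldots,c'_{r-1})$, and then invokes the already-proved rank-two Theorem~\ref{th:rank2reducible} on the $2$-diagonal inclusion $\Sigma\subset\widehat{\Sigma}\times\widehat{\Sigma}'$, together with the recursion $e_{r-1}(X_1,\ldots,X_r)=e_{r-1}(X_1,\ldots,X_{r-1})+e_{r-2}(X_1,\ldots,X_{r-1})X_r$. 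By contrast, you bypass the induction entirely and redo the curvature computation of Theorem~\ref{th:rank2reducible} directly with $r$ summands, using the closed-form identity $\sum_{i=1}^r 1/c'_i=e_{r-1}(c'_1,\ldots,c'_r)/\prod_{i=1}^r c'_i$. Your route is slightly more self-contained and makes the formula for $c$ transparent as a harmonic-type average; the paper's route is more modular, reusing Theorem~\ref{th:rank2reducible} as a black box. Both rely on the same structural inputs (Lemma~\ref{lemma:kblock}, Remark~\ref{remark:k-diagonaltotgeod}, and Table~\ref{table:rankone}), and the only delicate point you flag---reading off $\lambda_i=1/c'_i$ from the curvature normalization---is exactly the one the paper handles in the rank-two case and is unchanged here.
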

\begin{proof}
Let $\Sigma\subset M$ be a non-flat, $r$-diagonal, irreducible, totally geodesic submanifold. We will proceed by induction on $r$. The statement is true for $r=1$ by the classification of totally geodesic submanifolds in symmetric spaces of non-compact type and rank one (see Table \ref{table:rankone}). Let us assume that it is true for $r-1$ factors and we will prove it for $r$.  Observe that $\Sigma$ is contained in $\widehat{\Sigma}\times\widehat{\Sigma}'$  where $\widehat{\Sigma}$ is the projection of $\Sigma$ over $M_{1}\times\cdots\times M_{r-1}$ and $\widehat{\Sigma}'$ is the projection of $\Sigma$ over $M_{r}$. By Lemma \ref{lemma:projtriple}, we have that $\widehat{\Sigma}$ and $\widehat{\Sigma}'$ are totally geodesic submanifolds of $M$. Furthermore, by Lemma \ref{lemma:kblock}~\textit{iii)} and Remark~\ref{remark:k-diagonaltotgeod}, we have that $\widehat{\Sigma}$ and $\widehat{\Sigma}'$ are homothetic to $\Sigma$. Moreover, $\widehat{\Sigma}$ is $(r-1)$-diagonal in $M$, since $\Sigma$ is $r$-diagonal in $M$, and by induction hypothesis, $\widehat{\Sigma}$ is isometric to $\mathbb{F} \s H^n(\widetilde{c})$ for $\mathbb{F}\in\{\R,\C,\H,\O\}$, where $$\widetilde{c}=\frac{\prod_{i=1}^{r-1} c'_{i} }{e_{r-2}(c'_{1},\ldots,c'_{r-1})},$$ with $c'_{i}\in\{c_{i},c_{i}/4\}$ for each $i\in\{1,\ldots, r-1\}$. Let us assume that $\mathbb{F}=\R$,
 since the result follows similarly in the other cases. Then, by Theorem \ref{th:rank2reducible}, since $\Sigma$ is $2$-diagonal in $\widehat{\Sigma}\times \widehat{\Sigma}'$, and $\widehat{\Sigma}$ and  $\widehat{\Sigma}'$ are of rank one, the sectional curvature of $\Sigma$ is equal to the opposite of
\[ c=\frac{\widetilde{c}\hspace{0.1cm} c'_{r} }{\widetilde{c}+ c'_{r}}=\frac{\prod_{i=1}^r c'_{i}}{e_{r-1}(c'_{1},\ldots,c'_{r})},  \]
where we have used $e_{r-1}(X_1,\ldots,X_r)=e_{r-1}(X_1,\ldots,X_{r-1})+e_{r-2}(X_1,\ldots,X_{r-1})X_{r}$, for arbitrary variables $X_1,\ldots,X_r$.\qedhere
\end{proof}
\begin{figure}[h]
	\label{fig:youngtableaux}
	\begin{minipage}{1\textwidth}\small
		\hspace{0.23cm}
		\begin{tabular}{|c|c|c|l} 
			\cline{1-3}
			$\mathbb{R}\s H^3(c_1)\subset\mathbb{R}\s H^3(c_1)$ & $\mathbb{R}\s H^3(c_2/4)\subset\mathbb{C}\s H^3(c_2) \hspace{0.1cm}$ & $\mathbb{R}\s H^3(c_3)\subset\mathbb{H}\s H^3(c_3)\hspace{0.1cm} $ & $\R \s H^3\left(\frac{c_1c_2c_3}{c_1c_2+4c_1c_3+c_2c_3}\right)$ \\ \cline{1-3}
		\end{tabular}
	\end{minipage}%
	
	\vspace{0.3cm}
	\hspace{-7cm}
	\renewcommand{\arraystretch}{1.2}
	\begin{minipage}{1\textwidth}\small
		\hspace{3.6cm}
		\begin{tabular}{|l|cl}
			\cline{1-2}
			$\C \s H^2(c_2)\subset\mathbb{C}\s H^3(c_2) \hspace{0.1cm} $    & \multicolumn{1}{c|}{$\C \s H^2(c_3)\subset\mathbb{H}\s H^3(c_3) \hspace{0.1cm}$} & $\C \s H^2\left(\frac{c_2 c_3}{c_2+c_3}\right)$ \\ \cline{1-2}
			\multicolumn{1}{|c|}{$\R \s H^2(c_1)\subset\mathbb{R}\s H^3(c_1)$} &                                                              & \hspace{-3.4cm}$\R \s H^2(c_1)$                           \\ \cline{1-1}
		\end{tabular}%
	\end{minipage}
	
	\vspace{0.3cm}
	\hspace{-13.55cm}
	
	\begin{minipage}{1\textwidth}\small
		\hspace{0.25cm}
		\begin{tabular}{|c|c}
			\cline{1-1}
			$\mathbb{R}\s H^3(c_1)\subset \mathbb{R}\s H^3(c_1)$                                          & \multicolumn{1}{l}{$\mathbb{R}\s H^3(c_1)$ }                  \\ \cline{1-1}
			\multicolumn{1}{|l|}{$\mathbb{R}\s H^3(c_2/4)\subset\mathbb{C}\s H^3(c_2) \hspace{0.1cm} $} &   $\mathbb{R}\s H^3(c_2/4)$  \\ \cline{1-1}
			$\R \s H^4(c_3)\subset\mathbb{H}\s H^3(c_3) \hspace{0.1cm} $                       &\hspace{-0.4cm} $\R \s H^4(c_3)$ \\ \cline{1-1}
		\end{tabular}%
	\end{minipage}

	\caption{Three examples of Young tableaux adapted to the product  $M=\R \s H^3(c_1)\times\C \s H^3(c_2)\times \H \s H^3(c_3)$ and the corresponding totally geodesic submanifolds represented by each row (see Proposition~\ref{prop:youngtableaux}). Notice that the isometry type of these totally geodesic submanifolds can be computed using Corollary~\ref{cor:sympolynomial}.}
\end{figure}


Now we will provide the classification of totally geodesic submanifolds in products of symmetric spaces of rank one by introducing a combinatorial object that we will call adapted Young tableau.

We first recall the well-known notions of partition of a positive integer and of Young diagram. Let $r\ge1$ be a positive integer. Then, a \textit{partition} of $r$ is a vector $\lambda=(\lambda_1,\ldots,\lambda_k)\in \mathbb{Z}^k$ such that $r=\sum_{j=1}^k \lambda_j$ and $\lambda_1\ge\ldots\ge \lambda_k\ge 1$. For each partition $\lambda$ we associate a \textit{Young diagram}. This is a collection of boxes with $\lambda_j$ boxes in the $j$-th row, for each $j\in\{1,\ldots,k\}$.

Now we will introduce the notion of Young tableau adapted to a product $M$ of symmetric spaces of non-compact type and rank one. Let us consider $M=\mathbb{F}_1 \s H^{n_1}(c_1) \times \cdots \times \mathbb{F}_r \s H^{n_r}(c_r)$,
where $n_i\ge2$, $c_i>0$ and $\mathbb{F}_i\in\{\R,\C,\H,\O\}$ for each $i\in\{1,\ldots,r\}$.  Let $\lambda=(\lambda_1,\ldots,\lambda_k)$ be a partition of $r$ and let us consider its Young diagram. We will add to the $m$-th box in the $j$-th row a totally geodesic inclusion $\mathbb{F}'_{i_{j,m}}\s H^{n'_{i_{j,m}}}(c'_{i_{j,m}})\subset\mathbb{F}_{i_{j,m}}\s H^{n_{i_{j,m}}}(c_{i_{j,m}})$, for every $j\in\{1,\ldots,k\}$ and $m\in\{1,\ldots,\lambda_j\}$, where $\bigsqcup_{j=1}^k\{i_{j,1},\ldots,i_{j,\lambda_j}\}=\{1,\ldots,r\}$.
Furthermore, we require all totally geodesic submanifolds appearing in the $j$-th row to be mutually homothetic.
A Young diagram with this information will be called \textit{Young tableau adapted to $M$}. See Figure 1 for various examples of this.

The following proposition is crucial for the proof of Theorem~A.
\begin{proposition}
\label{prop:youngtableaux}
Let $M=M_1\times \cdots\times M_r$, where $M_i$ is a symmetric space of non-compact type and rank one for each $i\in\{1,\ldots,r\}$. Then, the following statements hold:
\begin{enumerate}
	\item[i)] For each Young tableau $T$ adapted to $M$ we can attach a set $\mathcal{S}(T)$ of semisimple totally geodesic submanifolds $\Sigma_T$ of $M$ which have non-trivial projection onto each factor of $M$.
	\item[ii)] If $\Sigma_T$ and $\widetilde{\Sigma}_T$ belong to $\mathcal{S}(T)$, then $\Sigma_T$ is isometric to $\widetilde{\Sigma}_T$.
	\item[iii)] If $\Sigma$ is a semisimple totally geodesic submanifold of $M$ which has non-trivial projection onto each factor of $M$, then it is equal to some $\Sigma_T\in\mathcal{S}(T)$ for some Young tableau $T$ adapted to $M$.
\end{enumerate}
\end{proposition}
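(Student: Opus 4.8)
The plan is to dispose of the three items in order, leaning on the rank-two classification (Theorem~\ref{th:rank2reducible}, Corollary~\ref{cor:sympolynomial}) and on the structural Lemmas~\ref{lemma:kblock}, \ref{lemma:proj}. For item \textit{i)}, I would read off from the adapted tableau $T$ its partition $\lambda=(\lambda_1,\ldots,\lambda_k)$ and its box data, the inclusion $N_{i_{j,m}}:=\mathbb{F}'_{i_{j,m}}\s H^{n'_{i_{j,m}}}(c'_{i_{j,m}})\subset M_{i_{j,m}}$ sitting in box $(j,m)$, and then work one row at a time. In row $j$ the symmetric spaces $N_{i_{j,1}},\ldots,N_{i_{j,\lambda_j}}$ are mutually homothetic rank one spaces of non-compact type, so their isometry algebras are isomorphic via isomorphisms preserving the Cartan decomposition; fixing such isomorphisms $\varphi^{(j)}_m\colon\g{g}_{N_{i_{j,1}}}\to\g{g}_{N_{i_{j,m}}}$ with $\varphi^{(j)}_1=\Id$ and using Remark~\ref{remark:k-diagonaltotgeod} together with Remark~\ref{remark:homothety}, the Lie triple system $\{\sum_{m=1}^{\lambda_j}\varphi^{(j)}_m X:X\in\g{p}_{N_{i_{j,1}}}\}$ defines a $\lambda_j$-diagonal totally geodesic submanifold $\Sigma^{(j)}\subset N_{i_{j,1}}\times\cdots\times N_{i_{j,\lambda_j}}$. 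Since $\bigsqcup_{j,m}\{i_{j,m}\}=\{1,\ldots,r\}$, the product $\Sigma_T:=\Sigma^{(1)}\times\cdots\times\Sigma^{(k)}$, embedded in $M_1\times\cdots\times M_r$ by routing box $(j,m)$ to the factor $M_{i_{j,m}}$, is a totally geodesic submanifold of $M$ (a product of diagonal, hence totally geodesic, submanifolds of sub-products); it is semisimple because each $\Sigma^{(j)}$ is isometric to a non-flat rank one space by Corollary~\ref{cor:sympolynomial}; and it projects non-trivially onto every $M_i$ because each index $i$ appears in exactly one box. Declaring $\mathcal{S}(T)$ to be the set of all $\Sigma_T$ obtained as the $\varphi^{(j)}_m$ vary proves \textit{i)}.

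For item \textit{ii)}, two members $\Sigma_T,\widetilde{\Sigma}_T\in\mathcal{S}(T)$ are products $\Sigma^{(1)}\times\cdots\times\Sigma^{(k)}$ and $\widetilde{\Sigma}^{(1)}\times\cdots\times\widetilde{\Sigma}^{(k)}$ whose factors $\Sigma^{(j)},\widetilde{\Sigma}^{(j)}$ are $\lambda_j$-diagonal totally geodesic submanifolds of the same product $N_{i_{j,1}}\times\cdots\times N_{i_{j,\lambda_j}}$, both homothetic to $N_{i_{j,1}}$. By Remark~\ref{remark:k-diagonaltotgeod} they are homothetic to one another, and by the curvature formula in Corollary~\ref{cor:sympolynomial} both are isometric to $\mathbb{F}\s H^{n}(c)$ with the same constant $c$ read off the $c'_{i_{j,m}}$ in row $j$ of $T$; hence $\Sigma^{(j)}$ and $\widetilde{\Sigma}^{(j)}$ are isometric for each $j$, and so are the products.

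For item \textit{iii)}, let $\Sigma$ be a semisimple totally geodesic submanifold of $M$ projecting non-trivially onto each factor. Since $M$ is of non-compact type, $\Sigma$ is of non-compact type (its curvature tensor is the restriction of that of $M$ and it has no flat factor), hence simply connected, and the De Rham theorem gives $\Sigma=\Sigma_1\times\cdots\times\Sigma_k$ with each $\Sigma_j$ an irreducible non-flat totally geodesic submanifold of $M$ (the slices are totally geodesic in $\Sigma$, which is totally geodesic in $M$). By Lemma~\ref{lemma:kblock} each $\Sigma_j$ is $k_j$-diagonal: there is $I_j\subset\{1,\ldots,r\}$, $|I_j|=k_j$, with $N^{(j)}_i:=\proj_i\Sigma_j$ of positive dimension exactly for $i\in I_j$ and each projection $\Sigma_j\to N^{(j)}_i$ a homothety, so the $N^{(j)}_i$, $i\in I_j$, are mutually homothetic. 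The sets $I_1,\ldots,I_k$ form a partition of $\{1,\ldots,r\}$: their union is everything because $\Sigma$ projects non-trivially onto each $M_i$, and $I_j\cap I_{j'}=\varnothing$ for $j\neq j'$ by Lemma~\ref{lemma:proj} applied to the splitting $\Sigma=\Sigma_j\times\bigl(\prod_{l\neq j}\Sigma_l\bigr)$ (a common index would force both factors to have positive-dimensional projection onto the same $M_i$). Reordering so that $|I_1|\ge\cdots\ge|I_k|$ and filling box $(j,m)$ with the inclusion $N^{(j)}_{i_{j,m}}\subset M_{i_{j,m}}$, where $i_{j,1},\ldots,i_{j,|I_j|}$ enumerates $I_j$, produces a Young tableau $T$ adapted to $M$. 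Finally, Remark~\ref{remark:k-diagonaltotgeod} expresses each $\Sigma_j$ exactly as $\{\sum_m\varphi^{(j)}_m X:X\in\g{p}_{N^{(j)}_{i_{j,1}}}\}$ for suitable Cartan-preserving isomorphisms $\varphi^{(j)}_m$, so $\Sigma=\Sigma_1\times\cdots\times\Sigma_k$ is precisely one of the submanifolds assembled from $T$ in item \textit{i)}, i.e.\ $\Sigma\in\mathcal{S}(T)$.

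The main obstacle is the combinatorial bookkeeping in item \textit{iii)}: verifying that the diagonal-index sets $I_1,\ldots,I_k$ of the irreducible factors of $\Sigma$ are pairwise disjoint, so that they assemble into an honest set partition of $\{1,\ldots,r\}$. This is exactly the role of Lemma~\ref{lemma:proj}, and it is precisely the step that breaks down (as the remark following that lemma illustrates) once the ambient factors are allowed to have rank greater than one.
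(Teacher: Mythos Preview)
Your approach follows the paper's line almost exactly: build the row-by-row diagonals for \textit{i)}, use the curvature formula for \textit{ii)}, and unwind an arbitrary semisimple $\Sigma$ via De~Rham, Lemma~\ref{lemma:kblock} and Lemma~\ref{lemma:proj} for \textit{iii)}. There is, however, one genuine oversight in your definition of $\mathcal{S}(T)$ that breaks the logical link between \textit{i)} and \textit{iii)}.

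The box labels of an adapted tableau are isometry \emph{types} of totally geodesic inclusions, not specific Lie triple systems. In your \textit{i)} you fix one realisation $N_{i_{j,m}}\subset M_{i_{j,m}}$ of each label and then declare $\mathcal{S}(T)$ to be ``the set of all $\Sigma_T$ obtained as the $\varphi^{(j)}_m$ vary''. Varying only the isomorphisms keeps the image Lie triple systems $\varphi^{(j)}_m(\g{p}_{N_{i_{j,1}}})=\g{p}_{N_{i_{j,m}}}$ fixed, so every member of your $\mathcal{S}(T)$ has $\proj_{i_{j,m}}\Sigma_T=N_{i_{j,m}}$. In \textit{iii)} you then produce from $\Sigma$ the projections $N^{(j)}_{i_{j,m}}=\proj_{i_{j,m}}\Sigma_j$; these have the right isometry type but need not coincide with the $N_{i_{j,m}}$ you fixed in \textit{i)}, so the conclusion $\Sigma\in\mathcal{S}(T)$ does not follow from your definition. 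The paper is explicit about this: the construction ``depends on the Lie triple system $\g{p}'_{i_{j,m}}$ in the subspace $\g{p}_{i_{j,m}}$ \emph{and} on the Lie algebra isomorphism $\varphi^j_{1,m}$'', and $\mathcal{S}(T)$ is obtained by letting \emph{both} vary. The fix is a one-line amendment to your definition of $\mathcal{S}(T)$.

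Your argument for \textit{ii)} is actually cleaner than the paper's. The paper passes through Proposition~\ref{prop:congruency} to show that the row factors $\Sigma^{(j)}$ and $\widetilde{\Sigma}^{(j)}$ are even congruent in $M$ (having first moved the $N$'s onto the $\widetilde{N}$'s by factor isometries), whereas you simply read off from Corollary~\ref{cor:sympolynomial} that both are isometric to the same $\mathbb{F}\s H^n(c)$, which is all the statement requires. Note, though, that once you enlarge $\mathcal{S}(T)$ as above, the two factors no longer sit in the \emph{same} product $N_{i_{j,1}}\times\cdots\times N_{i_{j,\lambda_j}}$; your argument via Corollary~\ref{cor:sympolynomial} still goes through unchanged because the isometry type depends only on the row data $(\mathbb{F},n,c'_{i_{j,1}},\ldots,c'_{i_{j,\lambda_j}})$, but your opening sentence of \textit{ii)} should be adjusted accordingly.
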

\begin{proof}
First of all, we will see how to construct a totally geodesic submanifold of $M$ from a Young tableau adapted to $M$. Let $T$ be a Young tableau adapted to $M$ and let us assume that it has $k$ rows. Let us further assume that it has $\lambda_j$ boxes in the $j$-th row. Namely, $\mathbb{F'}_{i_{j,1}}\s H^{n'_{i_{j,1}}}(c'_{i_{j,1}})\subset \mathbb{F}_{i_{j,1}}\s H^{n_{i_{j,1}}}(c_{i_{j,1}}), \ldots,\mathbb{F'}_{i_{j,\lambda_j}}\s H^{n'_{i_{j,\lambda_j}}}(c'_{i_{j,\lambda_j}})\subset\mathbb{F}_{i_{j,\lambda_j}}\s H^{n_{i_{j,\lambda_j}}}(c_{i_{j,\lambda_j}})$ are the labels in the boxes in the $j$-th row, where $\bigsqcup_{j=1}^k\{i_{j,1},\ldots,i_{j,\lambda_j}\}=\{1,\ldots,r\}$. Let $\g{p}_{i_{j,m}}$ be a Lie triple system corresponding to $M_{i_{j,m}}=\mathbb{F}_{i_{j,m}}\s H^{n_{i_{j,m}}}(c_{i_{j,m}})$, for each $m\in\{1,\ldots,\lambda_j\}$. Then, for each $m\in\{1,\ldots,\lambda_j\}$, there is some Lie triple system ${\g{p}}_{i_{j,m}}'\subset \g{p}_{i_{j,m}}$ which corresponds to the totally geodesic embedding in the $m$-th box of the $j$-th row of $T$. Notice that, by construction of $T$, these totally geodesic submanifolds are mutually homothetic. Let us define ${\g{g}}_{i_{j,m}}':={\g{p}}_{i_{j,m}}'\oplus[{\g{p}}_{i_{j,m}}',{\g{p}}_{i_{j,m}}']$.  Clearly, for any fixed $j\in\{1,\ldots,k\}$, all these Lie algebras ${\g{g}}_{i_{j,m}}'$, $m\in\{1,\ldots,\lambda_j\}$, are mutually isomorphic because they are the Lie algebras of the isometry groups of mutually homothetic semisimple symmetric spaces. Let $\varphi^j_{1,m}\colon {\g{g}}_{i_{j,1}}'\rightarrow{\g{g}}_{i_{j,m}}'$ be a Lie algebra isomorphism sending ${\g{p}}_{i_{j,1}}'$ into ${\g{p}}_{i_{j,m}}'$ for $m\in\{2,\ldots,\lambda_j\}$, and $\varphi^j_{1,1}$ be the identity map of $\g{g}_{i_{j,1}}'$. Now, we define $ \widehat{\g{p}}_j:=\left\{\sum_{m=1}^{\lambda_j} \varphi^j_{1,m} X: X\in \g{p}'_{i_{j,1}}\right\}$
for each $j\in\{1,\ldots,k\}$.
Then, $\widehat{\g{p}}_j$ is a Lie triple system in $\g{p}$ which is $\lambda_j$-diagonal.

We perform this process for each row $j\in\{1,\ldots,k\}$ to define
$ {\g{p}}_{T}:= \bigoplus_{j=1}^k \widehat{\g{p}}_j$.
By construction, we have that $[\widehat{\g{p}}_j ,\widehat{\g{p}}_{j'} ]=0$ for $j\neq j'$ in $\{1,\ldots,k\}$. Hence, ${\g{p}}_{T}$ is a Lie triple system in $\g{p}$ and we will denote by $\Sigma_T=\Sigma_1\times\cdots\times\Sigma_k$ its corresponding totally geodesic submanifold, where $\Sigma_j$ is the totally geodesic submanifold corresponding to $\widehat{\g{p}}_j$. Consequently, for a Young tableau $T$  adapted to $M$, we have constructed a semisimple totally geodesic submanifold $\Sigma_T$ of $M$, that has non-trivial projection onto each factor of $M$. However, notice that this construction depends on the Lie triple system $\g{p}'_{i_{j,m}}$ in the subspace $\g{p}_{i_{j,m}}$ and on the Lie algebra isomorphism $\varphi^j_{1,m}$ that we chose, and if we choose different Lie triple systems and isomorphisms, we get different semisimple totally geodesic submanifolds with non-trivial projection onto each factor of $M$. Hence, for each Young tableau $T$ adapted to $M$ we can attach a set $\mathcal{S}(T)$ which is equal to the set of all the totally geodesic submanifolds which can be constructed from $T$ through the process described above. This proves \textit{i)}.

Now we will check that all totally geodesic submanifolds in $\mathcal{S}(T)$ are  mutually isometric. First of all, observe that two totally geodesic submanifolds of a symmetric space of rank one are congruent if and only if they are isometric. Hence, the totally geodesic embedding $\mathbb{F}'_{i_{j,m}} \s H^{n'_{i_{j,m}}}(c'_{i_{j,m}})\subset \mathbb{F}_{i_{j,m}} \s H^{n_{i_{j,m}}}(c_{i_{j,m}})$ represents a congruence class of totally geodesic embeddings in $M_{i_{j,m}}=\mathbb{F}_{i_{j,m}} \s H^{n_{i_{j,m}}}(c_{i_{j,m}})$. Let $N_{i_{j,m}}$ and $\widetilde{N}_{i_{j,m}}$ be congruent totally geodesic submanifolds in $M_{i_{j,m}}$ corresponding to the inclusion in the $m$-th box of the $j$-th row for each $m\in\{1,\ldots,\lambda_j\}$ and $j\in\{1,\ldots,r\}$. Then, there is a isometry $\varphi_{i_{j,m}}$ of $M_{i_{j,m}}$ such that $\varphi_{i_{j,m}}N_{i_{j,m}}=\widetilde{N}_{i_{j,m}}$. Following the above procedure, we obtain two different $\lambda_j$-diagonal totally geodesic submanifolds, namely,
\[\Sigma_j\subset N_{i_{j,1}}\times\cdots \times N_{i_{j,\lambda_j}}\quad \text{and} \quad \widetilde{\Sigma}_j\subset \widetilde{N}_{i_{j,1}}\times\cdots \times \widetilde{N}_{i_{j,\lambda_j}},    \]
	such that $\Sigma_j$ and $\widetilde{\Sigma}_j$ are homothetic to $N_{i_{j,m}}$ and $\widetilde{N}_{i_{j,m}}$ for every $m\in\{1,\ldots,\lambda_j\}$ and for each $j\in\{1,\ldots,k\}$. Now let $\varphi_j:=(\varphi_{i_{j,1}},\ldots,\varphi_{i_{j,\lambda_j}})\colon M_{i_{j,1}}\times\cdots\times M_{i_{j,\lambda_j}}\rightarrow M_{i_{j,1}}\times\cdots\times M_{i_{j,\lambda_j}}$. Then, $\varphi_j$ is an isometry of $M_{i_{j,1}}\times\cdots\times M_{i_{j,\lambda_j}}$ for every $j\in\{1,\ldots,k\}$. Moreover, $\varphi_j \Sigma_j$ is a non-flat, irreducible, $\lambda_j$-diagonal totally geodesic submanifold in $\widetilde{N}_{i_{j,1}}\times\cdots \times \widetilde{N}_{i_{j,\lambda_j}}$. By Proposition~\ref{prop:congruency}, there is $\psi_j\in \Isom(\widetilde{N}_{i_{j,1}})\times\cdots\times\Isom(\widetilde{N}_{i_{j,\lambda_j}})$ such that  $\psi_j\varphi_j \Sigma_j=\widetilde{\Sigma}_j$.  Consequently, $\Sigma=\Sigma_1\times \cdots\times\Sigma_k$ is isometric to $\widetilde{\Sigma}=\widetilde{\Sigma}_1\times\cdots\times\widetilde{\Sigma}_k$. This proves \textit{ii)}.

Let $\Sigma\subset M$ be a semisimple totally geodesic submanifold that has non-trivial projection onto each factor of $M$. Let $k\leq r$ be the rank of $\Sigma$. By combining De-Rham Theorem, Lemma \ref{lemma:kblock} and Lemma \ref{lemma:proj}, we can ensure the existence of a partition $\{ \{i_{j,1},\ldots,i_{j,\lambda_j}\}: j\in\{1,\ldots,k\}\}$ of $\{1,\ldots,r\}$ satisfying:
\begin{itemize}
	\item $\Sigma=\Sigma_1\times \cdots\times \Sigma_k$, where $\Sigma_j$ has rank one for every $j\in\{1,\ldots,k\}$.
	\item $\Sigma_j\subset M$ is $\lambda_j$-diagonal for every $j\in\{1,\ldots,k\}$.
	\item $r=\sum_{j=1}^k \lambda_j$.
	\item $\Sigma_j\subset N_j:=N_{i_{j,1}}\times\cdots\times N_{i_{j,\lambda_j}}$, where $N_{i_{j,m}}\subset M_{i_{j,m}}$ is a totally geodesic submanifold homothetic to $\Sigma_j$ for each $j\in\{1,\ldots,k\}$ and $m\in\{1,\ldots,\lambda_j\}$.
\end{itemize}
 Then, up to some reordering, we can assume that $(\lambda_1,\ldots,\lambda_k)$ is a partition of $r$. Let $T$ be the Young diagram associated with $(\lambda_1,\ldots,\lambda_k)$. Then, $\Sigma_j$ projects non-trivially exactly onto $\lambda_j$ factors of $M$. We fill the $\lambda_j$ boxes of the $j$-th row of $T$ with the  data corresponding to the totally geodesic inclusion resulting of projecting $\Sigma_j$ in each of these factors. Therefore, we can find a Young tableau $T$ adapted to $M$ such that $\Sigma=\Sigma_1\times\cdots\times \Sigma_k$ is equal to some $\Sigma_{T}$ in $\mathcal{S}(T)$. This proves \textit{iii)}.\qedhere \end{proof}
 We are now ready to prove the first main theorem of this article.
\begin{proof}[Proof of Theorem~A]
By Proposition~\ref{prop:youngtableaux}, it follows that $\Sigma_0\times\Sigma_T$ is a totally geodesic submanifold of $M$, where $\Sigma_T$ is a semisimple totally geodesic submanifold corresponding to a Young tableau $T$ adapted to $M_{\sigma(1)}\times \cdots\times M_{\sigma(k)}$, $\Sigma_0$ is a flat totally geodesic submanifold of $M_{\sigma({k+1})}\times \cdots \times M_{\sigma(r)}$,  $\sigma$ is any permutation of $\{1,\ldots,r\}$, and $k\in\{1,\ldots,r\}$.

Let $\Sigma$ be a totally geodesic submanifold of $M$. By De-Rham Theorem we have $\Sigma=\Sigma_0\times \Sigma_1$, where $\Sigma_0$ is flat and $\Sigma_1$ is semisimple. Then, $\Sigma_1$ projects non-trivially onto $M_{\sigma(1)}\times\cdots\times M_{\sigma(k)}$ for some  $k\in\{1,\ldots,r\}$ and some permutation $\sigma$ of $\{1,\ldots,r\}$. Thus, by Lemma \ref{lemma:proj}, we have $\Sigma_0\subset M_{\sigma(k+1)}\times \cdots\times M_{\sigma(r)}$. Now, by Proposition \ref{prop:youngtableaux}, there is a Young tableau $T$ adapted to $M_{\sigma(1)}\times\cdots\times M_{\sigma(k)}$ such that $\Sigma_1$ is equal to some $\Sigma_T$ in $\mathcal{S}(T)$. Therefore, $\Sigma$ is equal to $\Sigma_0\times\Sigma_T$ as submanifolds in $M$.
\end{proof}

\section{Totally geodesic submanifolds in Hermitian symmetric spaces}
\label{sect:slant}

In this section, we will construct infinitely many examples of totally geodesic submanifolds in Hermitian symmetric spaces which have constant K\"ahler angle different from $0$ or~$\pi/2$.

We start  by recalling the notion of K\"ahler angle (see \cite{damekricci} or \cite{mathz}). Let us equip the complex vector space  $\mathbb{C}^n$ with $\langle \cdot,\cdot\rangle$, the scalar product given by considering the real part of its standard Hermitian scalar product, and denote the complex structure of $\mathbb{C}^n$ by $J$ (multiplication by the imaginary unit). Furthermore, let us  consider $V\subset \mathbb{C}^n$ a real vector subspace and  the orthogonal projection $\pi_V\colon \mathbb{C}^n\rightarrow V$  onto $V$. The K\"ahler angle of a non-zero $v\in V$ with respect to $V$ is defined as the value $\varphi\in[0,\pi/2]$ such that $\langle \pi_V J v, \pi_V J v\rangle=\cos^2(\varphi)\langle v,v\rangle$. We say that a real subspace $V\subset \mathbb{C}^n$ has \textit{constant K\"ahler angle} $\varphi\in[0,\pi/2]$ if the K\"ahler angle of every non-zero vector $v\in V$ is $\varphi$. In particular,  $V\subset \mathbb{C}^n$ has constant K\"ahler angle equal to $0$ if and only if it is a complex subspace, and it has constant K\"ahler angle equal to $\pi/2$ if and only if it is totally real. Also, a submanifold $\Sigma$ in a K\"ahler manifold $M$ is said to have constant K\"ahler angle $\varphi\in[0,\pi/2]$ if the tangent space of $\Sigma$ at each point is a subspace with constant K\"ahler angle $\varphi$ in the corresponding tangent space of $M$. In the setting of Hermitian symmetric spaces, since totally geodesic submanifolds are homogeneous and the isometries that belong to the connected component of the identity of the isometry group are holomorphic, the previous property
needs to be checked only at one point.

Now we will recall some known facts about complex Grassmannians which will be useful in this section. Let $M=\s G_k(\C^{n+k})$ be the Grassmannian of complex $k$-planes in $\C^{n+k}$. Then, we have  $M=\s G/\s K$, with $\s G=\mathsf{SU}_{n+k}$ and $\s K=\mathsf{S}(\mathsf{U}_k\times\mathsf{U}_n)$. We can decompose $\mathfrak{su}_{n+k}$ as  $\mathfrak{su}_{n+k}=\g{k}\oplus\g{p}$ where
\begin{equation}
\label{eq:cartandec}
\begin{aligned}
 \g{k}&=\left\{ \left(
 \begin{array}{c|c}
 A & 0 \\
 \hline
 0 & B
 \end{array}
 \right): A\in \g{u}_k, B\in\g{u}_n, \tr A + \tr B=0 \right\}, \\
 \g{p}&=\left\{
 \left(
 \begin{array}{c|c}
 0 & X \\
 \hline
 -X^* & 0
 \end{array}
 \right): X \in \mathcal{M}_{k,n}(\mathbb{C})  \right\}.
 \end{aligned}
\end{equation}
Furthermore, $M=\s G/\s K$ is a Hermitian symmetric space. Then,   $\g{p}$ inherits a complex structure $J$ which is given by $\ad_Z\vert_\g{p}$ for some $Z\in Z(\g{k})$ (see \cite[Theorem 7.117]{knapp}).
Hence, we can identify $\g{p}$ with the complex vector space $\C^k\otimes \C^n$ by the usual isomorphism with matrices with complex entries. 

For $k=1$, we have $\s G_1(\C^{n+1})$, which is the complex projective space $\C \s P^n$. In this case the Lie algebra of the isometry group is $\g{su}_{n+1}$.  Let $\Theta\colon \g{su}_{n+1}\rightarrow \g{su}_{n+1}$ be such that $X\in \g{su}_{n+1}$ is mapped to $\overline{X}\in \g{su}_{n+1}$, the complex conjugate of $X$. Clearly, $\Theta$ is a Lie algebra automorphism of $\g{su}_{n+1}$ and  preserves $\g{p}$. Furthermore, let $\{e_1,\ldots, e_n\}$ be the canonical basis for $\g{p}\equiv \C^n$. Observe that we have
\begin{equation}
\label{eq:conjugation}
 \Theta e_i = e_i, \qquad \Theta J e_i=- J e_i, 
\end{equation}
for every $i=1,\ldots,n$.

\begin{remark}
	Notice that $\C\s P^1$ is identified with the round sphere of dimension $2$ and constant sectional curvature equal to one.
\end{remark}

\begin{theorem}
\label{th:slant}
Let $M=\C\s P^{n}\times\stackrel{k)}{\cdots}\times\C \s P^n$. Then, for each $s\in\{0,\ldots,k\}$, there is a $k$-diagonal totally geodesic submanifold homothetic to $\C\s P^n$ in $M$ with constant K\"ahler angle $\varphi\in [0,\pi/2]$ satisfying 
\[  \cos(\varphi)=\left|\frac{2s-k}{k}\right|.\]  
\end{theorem}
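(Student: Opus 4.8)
The plan is to realize each of these submanifolds as an explicit $k$-fold diagonal copy of $\C\s P^n$ built from the \emph{anti}-holomorphic automorphism $\Theta$ of $\g{su}_{n+1}$ recorded in~(\ref{eq:conjugation}), and then to read off the K\"ahler angle from a one-line orthogonal-projection computation.

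First I would fix $s\in\{0,\ldots,k\}$ and, using that $\Theta\in\Aut(\g{su}_{n+1})$ with $\Theta\g{p}=\g{p}$, define the Lie algebra monomorphism $\iota_s\colon\g{su}_{n+1}\to\g{su}_{n+1}\oplus\stackrel{k)}{\cdots}\oplus\g{su}_{n+1}$ by $\iota_s(Y)=(Y,\ldots,Y,\Theta Y,\ldots,\Theta Y)$, with $Y$ in the first $s$ entries and $\Theta Y$ in the remaining $k-s$ entries, and then set $\g{q}_s:=\iota_s(\g{p})=\{(X,\ldots,X,\Theta X,\ldots,\Theta X):X\in\g{p}\}$. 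Since $\iota_s$ is a Lie algebra homomorphism preserving the Cartan decompositions, $\g{q}_s$ is a Lie triple system in $\g{p}\oplus\stackrel{k)}{\cdots}\oplus\g{p}$, hence by Cartan's criterion it corresponds to a totally geodesic submanifold $\Sigma_s\subset M$ (this is also an instance of Remark~\ref{remark:homothety} and Remark~\ref{remark:k-diagonaltotgeod}). Every nonzero $X\in\g{p}$ projects onto each factor as $X$ or $\Theta X$, both nonzero, so $\Sigma_s$ is $k$-diagonal; and since $\Theta$ restricted to $\g{p}\equiv\C^n$ is a linear isometry (complex conjugation preserves the real part of the Hermitian product), $\iota_s$ multiplies the metric by the constant $k$, whence $\Sigma_s$ is homothetic to $\C\s P^n$.

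The heart of the argument is the K\"ahler-angle computation, and the crucial input is that $\Theta$ is anti-holomorphic on $\g{p}$: from~(\ref{eq:conjugation}) one has $\Theta J=-J\Theta$ on $\g{p}$. The complex structure of $M$ acts diagonally, so for $X\in\g{p}$,
\[J_M\,\iota_s(X)=(JX,\ldots,JX,-\Theta JX,\ldots,-\Theta JX).\]
Writing the orthogonal projection of this vector onto $\g{q}_s$ as $\iota_s(Y)$ with $Y\in\g{p}$ and testing against $\iota_s(Z)$, $Z\in\g{p}$, the first $s$ slots contribute $s\langle JX,Z\rangle$ while the last $k-s$ slots contribute $-(k-s)\langle\Theta JX,\Theta Z\rangle=-(k-s)\langle JX,Z\rangle$, and $\langle\iota_s(Y),\iota_s(Z)\rangle=k\langle Y,Z\rangle$; hence $Y=\tfrac{2s-k}{k}JX$, and therefore
\[\langle\pi_{\g{q}_s}J_M\iota_s(X),\pi_{\g{q}_s}J_M\iota_s(X)\rangle=\Big(\frac{2s-k}{k}\Big)^2\langle\iota_s(JX),\iota_s(JX)\rangle=\Big(\frac{2s-k}{k}\Big)^2\langle\iota_s(X),\iota_s(X)\rangle.\]
This is independent of $X$, so $\g{q}_s$ has constant K\"ahler angle, and since $\Sigma_s$ is homogeneous with the identity component of its isometry group acting holomorphically, $\Sigma_s$ has constant K\"ahler angle $\varphi$ with $\cos\varphi=|2s-k|/k$.

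I expect the only genuinely non-routine step to be guessing the construction — specifically, realizing that the diagonal entries must be twisted by the \emph{anti}-holomorphic $\Theta$ rather than by identity-component isometries (which would only ever produce complex submanifolds, $\varphi=0$); the cancellation $s-(k-s)=2s-k$ between the holomorphic and anti-holomorphic slots is precisely what generates the intermediate angles. Everything else (Lie triple system, $k$-diagonality, the homothety constant, and the constancy of $\varphi$) falls out of the setup. As a sanity check, $s\in\{0,k\}$ give $\cos\varphi=1$, i.e.\ complex diagonals, and for even $k$ the value $s=k/2$ gives $\cos\varphi=0$, a totally real one; the remaining values of $s$ yield non-trivial constant K\"ahler angles, which is the family that will be fed into the proof of Theorem~B.
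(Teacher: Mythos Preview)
Your proof is correct and follows essentially the same approach as the paper: both construct $\Sigma_s$ by twisting $s$ of the $k$ diagonal entries by the identity and the remaining $k-s$ by the anti-holomorphic automorphism $\Theta$ (the paper writes this as $\sigma_i^+$ versus $\sigma_i^-=\sigma_i^+\circ\Theta$, allowing arbitrary identifications between factors, while you identify the factors directly). The only notable difference is computational: the paper builds an explicit orthonormal basis $\{v_i,w_i\}$ of $\g{q}_s$ and evaluates $\langle Jv_i,w_j\rangle$ term by term, whereas you extract the projection $\pi_{\g{q}_s}J_M\iota_s(X)=\iota_s\big(\tfrac{2s-k}{k}JX\big)$ in one step by pairing against a general $\iota_s(Z)$; your route is a bit cleaner but the content is identical.
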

\begin{proof}
First of all, let $\g{p}_i$ be the Lie triple system corresponding to the $i$-th factor of $M$.  Then, $\g{g}_i=\g{p}_i\oplus \g{k}_i\simeq \g{su}_{n+1}$, where $\g{k}_i:=[\g{p}_i,\g{p}_i]$. Moreover, consider the complex structure $J=(J_1,\ldots,J_k)$ of $\bigoplus_{i=1}^k \g{p}_i$, where $J_i$ is the complex structure induced by some element in $Z(\g{k}_i)$. Let $\sigma_{i}\colon \g{g}_1\rightarrow \g{g}_i$ be a Lie algebra isomorphism for $i\in\{2,\ldots,k\}$, and $\sigma_1:=\Id_{\mathfrak{g}_1}$. Furthermore, let us assume that $\sigma_i$ restricted to $\g{k}_1$ induces an isomorphism between $\g{k}_1$ and $\g{k}_i$. Thus, we have $\sigma_i\ad_{Z} X=\ad_{\sigma_i Z} \sigma_i X$,
for each $X\in \g{p}_1$ and $Z\in Z(\g{k}_1)$. In particular, by the discussion above and taking into account that $\dim Z(\g{k}_i)=1$, we have $\sigma_i J_1 X~=\pm J_i\sigma_i X$, for every $X\in \g{p}_1$, where we denote by $J_1$ and $J_i$ the complex structures of both $\g{p}_1$ and $\g{p}_i$, respectively. We will declare $\sigma_i=\sigma^+_i$, when $\sigma_i J_1(\sigma_i)^{-1}=J_i$, and $\sigma_i=\sigma^-_i$, when $\sigma_i J_1(\sigma_i)^{-1}=-J_i$. Additionally, notice that $\sigma^-_i=\sigma^+_{i}\circ \Theta$. Let us consider
\[  \g{p}_{\Sigma^s}:=\left\{ \sum_{i=1}^s \sigma^+_i   X + \sum_{i=s+1}^k\sigma^{-}_{i} X: X\in \g{p}_1   \right\},  \]
for each $s\in\{0,\ldots,k\}$, and let $\Sigma^s$ be its corresponding totally geodesic submanifold in $M$.
Clearly, this is a Lie triple system and it must correspond to some totally geodesic submanifold $\Sigma^s$ which is homothetic to $\C\s P^n$ by Corollary \ref{cor:sympolynomial} via duality (see also Remark~\ref{remark:k-diagonaltotgeod}). Let us choose  a $\C$-orthonormal basis $\{e_i\}_{i=1}^n$ for $\g{p}_1$ satisfying Equation (\ref{eq:conjugation}). Then, taking into account that  $\sigma^+_{i}$ restricted to $\g{k}_1$ gives an isomorphism onto $\g{k}_i$, and Equation (\ref{eq:conjugation}),
we can express $\g{p}_{\Sigma^s}$ as $\spann_{\R}\{v_i, w_i\}_{i=1}^n$, where
\[v_i:= \frac{1}{\sqrt{k}} \sum_{j=1}^k \sigma^+_{j} e_i, \qquad w_i:= \frac{1}{\sqrt{k}} \sum_{j=1}^s \sigma^+_{j}  J_1 e_i - \frac{1}{\sqrt{k}} \sum_{j=s+1}^k \sigma^+_j J_1 e_i  \]
constitute an orthonormal basis of $\g{p}_{\Sigma^s}$.
Let $v\in \g{p}_{\Sigma^s}$ be a unit vector. Then, we can write  $v=\sum_{i=1}^n \left( a_i v_i + b_i w_i\right)$, where $\{a_i\}_{i=1}^n$ and $\{b_i\}_{i=1}^n$ are real numbers satisfying that $\sum_{i=1}^n \left( a_i^2+b_i^2\right)=1$.
Now we have 
\begin{equation*}
\pi_{\g{p}_{\Sigma^s}} J v=\pi_{\g{p}_{\Sigma^s}}\bigg(\sum_{j=1}^n (a_i J v_i + b_i J w_i) \bigg)=\sum_{i=1}^{n} (a_i \pi_{\g{p}_{\Sigma^s}} J v_i + b_i \pi_{\g{p}_{\Sigma^s}} J w_i),
\end{equation*}
where $\pi_{\g{p}_{\Sigma^s}}$ denotes the orthogonal projection onto $\g{p}_{\Sigma^s}$. Moreover,
\begin{align*}
\pi_{\g{p}_{\Sigma^s}} J v_i&=\sum_{j=1}^n (\langle Jv_i, v_j\rangle v_j + \langle J v_i, w_j\rangle w_j)= \langle J v_i, w_i\rangle w_i\\
\pi_{\g{p}_{\Sigma^s}} J w_i&=\sum_{j=1}^n( \langle Jw_i, v_j\rangle v_j + \langle J w_i, w_j\rangle w_j)= \langle J w_i, v_i\rangle v_i,
\end{align*}
for each $i\in\{1,\ldots,n\}$, where we have used that $\sigma_j^+$ is a linear isometry, since all the factors in $M$ are mutually isometric. Furthermore, using again that $\sigma^+_j$ is a linear isometry that commutes with~$J_j$, we get
\[\langle J v_i, w_i\rangle= \frac{1}{k} \left( \sum_{j=1}^s \langle J_j \sigma^+_j  e_i,  \sigma^+_j J_1 e_i\rangle - \sum_{j=s+1}^{k} \langle J_j \sigma^+_{j}  e_i,\sigma^+_{j} J_1 e_i \rangle \right)=\frac{2s-k}{k}.  \]
Consequently, combining these equations, we conclude 
\begin{align*}
\langle \pi_{\g{p}_{\Sigma^s}} J v, \pi_{\g{p}_{\Sigma^s}} J v\rangle= \sum_{i=1}^n( a_i^2 \langle J v_i, w_i\rangle^2 + b_i^2 \langle J w_i, v_i\rangle^2)= \sum_{i=1}^n (a_i^2+b_i^2)\langle J v_i,w_i\rangle^2=\left(\frac{2s-k}{k}\right)^2.
\end{align*}
Since $v$ is an arbitrary unit vector in $\g{p}_{\Sigma^s}$, we have that $\Sigma^s$ has constant K\"ahler angle equal to $\varphi\in [0,\pi/2]$, where $\cos(\varphi)=\left|\frac{2s-k}{k}\right|$.
\end{proof}


Thus, Theorem \ref{th:slant} gives a method to construct totally geodesic submanifolds with constant K\"ahler angle in Hermitian symmetric spaces. We only need to find a product of complex projective spaces embedded in a totally geodesic way and use Theorem \ref{th:slant}. In particular, in the complex Grassmannians these products are very abundant as the following lemma shows.

\begin{lemma}
\label{lemma:grassman}
Let $(n_1,\ldots,n_k)$ be a partition of $n$. Then, there is a complex totally geodesic submanifold homothetic to $\C\s P^{n_1}\times \cdots\times \C\s P^{n_k}$ in $\s G_k(\C^{n+k})$.
\end{lemma}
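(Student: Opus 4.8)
The plan is to realise the desired submanifold explicitly as the totally geodesic submanifold $\exp_o(\g{m})$ attached to a Lie triple system $\g{m}$ coming from a direct sum decomposition of $\C^{n+k}$, and then to identify it. Since $(n_1+1)+\cdots+(n_k+1)=n+k$, I would fix a decomposition $\C^{n+k}=E_1\oplus\cdots\oplus E_k$ into mutually orthogonal complex subspaces with $\dim_{\C}E_j=n_j+1$, and inside each $E_j$ fix a complex line $\ell_j$ together with its orthogonal complement $V_j$ in $E_j$, so $\dim_{\C}V_j=n_j$. Then $W_0:=\ell_1\oplus\cdots\oplus\ell_k$ is a complex $k$-plane, that is, a point $o\in\s G_k(\C^{n+k})$, which I take as base point. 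With respect to $o$ one has the Cartan decomposition $\g{su}_{n+k}=\g{k}\oplus\g{p}$ of \eqref{eq:cartandec}, with $\g{p}\cong\mathrm{Hom}_{\C}(W_0,W_0^{\perp})=\bigoplus_{i,j}\mathrm{Hom}_{\C}(\ell_i,V_j)$, and Cartan involution $\theta=\Ad(s_o)$, where $s_o$ equals $+\mathrm{Id}$ on $W_0$ and $-\mathrm{Id}$ on $W_0^{\perp}$.

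Next I would single out $\g{m}:=\bigoplus_{j=1}^{k}\mathrm{Hom}_{\C}(\ell_j,V_j)\subset\g{p}$, which is precisely the tangent space at $o$ to the set $\Sigma$ of those $k$-planes of $\C^{n+k}$ of the form $L_1\oplus\cdots\oplus L_k$ with $L_j\subset E_j$ a complex line. To see that $\g{m}$ is a Lie triple system it is enough to observe that $\g{h}:=\{X\in\g{su}_{n+k}: X E_j\subseteq E_j\text{ for all }j\}=\g{s}(\g{u}(E_1)\oplus\cdots\oplus\g{u}(E_k))$ is a subalgebra which is $\theta$-stable, since each $E_j=\ell_j\oplus V_j$ is $\theta$-invariant; hence $\g{h}=(\g{h}\cap\g{k})\oplus(\g{h}\cap\g{p})$ and $\g{m}=\g{h}\cap\g{p}$ is a Lie triple system. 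By Cartan's criterion it corresponds to a totally geodesic submanifold $\Sigma=\exp_o(\g{m})$ of $\s G_k(\C^{n+k})$, and $\Sigma$ is complex because $\g{m}$ is $J$-invariant: the complex structure $J=\ad_Z\vert_{\g{p}}$ acts on $\mathrm{Hom}_{\C}(W_0,W_0^{\perp})$ as multiplication by a unit imaginary scalar, which preserves each complex subspace $\mathrm{Hom}_{\C}(\ell_j,V_j)$ (equivalently, the $E_j$ being complex, the map $\s G_1(E_1)\times\cdots\times\s G_1(E_k)\to\s G_k(\C^{n+k})$, $(L_1,\ldots,L_k)\mapsto L_1\oplus\cdots\oplus L_k$, is holomorphic).

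It then remains to identify $\Sigma$. Writing $\g{m}_j:=\mathrm{Hom}_{\C}(\ell_j,V_j)$ one has $[\g{m}_i,\g{m}_j]=0$ for $i\neq j$, so $\g{g}_{\g{m}}:=\g{m}\oplus[\g{m},\g{m}]=\bigoplus_{j=1}^{k}\g{g}_{\g{m}_j}$ with $\g{g}_{\g{m}_j}:=\g{m}_j\oplus[\g{m}_j,\g{m}_j]$; a direct computation of brackets inside $\g{u}(E_j)$ gives $[\g{m}_j,\g{m}_j]=\g{s}(\g{u}(\ell_j)\oplus\g{u}(V_j))$, whence $\dim\g{g}_{\g{m}_j}=2n_j+n_j^2=(n_j+1)^2-1=\dim\g{su}(E_j)$ and therefore $\g{g}_{\g{m}_j}=\g{su}(E_j)$. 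By \cite[Proposition 11.1.2]{BCO} we get $\Sigma=\s G_{\g{m}}\cdot o=(\s{SU}(E_1)\times\cdots\times\s{SU}(E_k))\cdot o=(\s{SU}(E_1)\cdot\ell_1)\times\cdots\times(\s{SU}(E_k)\cdot\ell_k)=\s G_1(E_1)\times\cdots\times\s G_1(E_k)$, which is $\C\s P^{n_1}\times\cdots\times\C\s P^{n_k}$; the product is Riemannian since $[\g{m}_i,\g{m}_j]=0$. On each factor the induced metric is a multiple of the Fubini--Study metric by Schur's lemma, because the isotropy representation of $\C\s P^{n_j}$ is irreducible, and the proportionality constant equals $-\mathcal{B}_{\g{su}_{n+k}}(X,X)/|v|^2=4(n+k)$ for $X\in\g{m}_j$ corresponding to $v\in\C^{n_j}$, the same for every $j$; hence $\Sigma$ is homothetic to $\C\s P^{n_1}\times\cdots\times\C\s P^{n_k}$.

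The only steps requiring a little care are the bracket computation yielding $[\g{m}_j,\g{m}_j]=\g{s}(\g{u}(\ell_j)\oplus\g{u}(V_j))$ --- that is, that these commutators already generate the full isotropy subalgebra of $\C\s P^{n_j}$ --- and the verification that the homothety constant $4(n+k)$ is independent of $j$, so that $\Sigma$ is genuinely homothetic, and not merely diffeomorphic with factorwise rescaled metrics, to the stated product. Both are elementary matrix computations.
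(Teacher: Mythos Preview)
Your proof is correct and follows essentially the same construction as the paper: both build the Lie triple system as the block-diagonal subspace $\bigoplus_j \g{m}_j$ (your $\g{m}_j=\mathrm{Hom}_{\C}(\ell_j,V_j)$ is exactly the paper's $\g{p}_j=\spann_{\C}\{e_j\otimes e_m\}$ for $m$ in the $j$-th block), check that $[\g{m}_i,\g{m}_j]=0$ and $\g{m}_j\oplus[\g{m}_j,\g{m}_j]\cong\g{su}_{n_j+1}$, and observe $J$-invariance. Your presentation is a bit more invariant---using the $\theta$-stable subalgebra $\g{h}=\g{s}(\bigoplus_j\g{u}(E_j))$ to get the Lie triple property in one line, and computing the common scaling factor $4(n+k)$ directly---whereas the paper works in matrix coordinates and appeals to a row-permuting element of $\s K$ to see that all factors carry the same multiple of the Fubini--Study metric; but the underlying argument is the same.
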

\begin{proof}
Let $\Sigma=\C\s  P^{n_1}\times \cdots\times \C\s P^{n_k}$ and $(n_1,\dots,n_k)$ a partition of $n$. This means that $n_1\ge \ldots \ge n_k$ and $\sum_{i=1}^k n_i=n$. Now, for each $i\in\{1,\ldots, k\}$, we define the subspace 
\[\g{p}_i:=\spann_{\mathbb{C}}\{e_i\otimes e_{1+\sum_{j=1}^{i-1} n_{j}}, \ldots, e_i \otimes e_{\sum_{j=1}^i n_j} \},   \]
 where $\{e_l\otimes e_m\}_{l,m=1}^{k,n}$ is the canonical basis of $\C^k\otimes\C^n$.

Let us define $\g{p}_\Sigma:=\bigoplus_{i=1}^k \g{p}_i$. We will see that $\g{p}_{\Sigma}$ is a Lie triple system corresponding to $\Sigma$. Using the Lie bracket of $\g{su}_{n+k}$ and the description of $\g{p}$ in Equation (\ref{eq:cartandec}), it can be checked that
$\g{k}_i:=[\g{p}_i,\g{p}_i]\simeq \g{s}(\g{u}_1\times\g{u}_{n_i})$ and that $\g{p}_i$ is a $\g{k}_i$-module. Furthermore, it can be seen that $\g{g}_i=\g{k}_i\oplus \g{p}_i$ is isomorphic to $\g{su}_{n_i+1}$. This implies that $\g{p}_i$ is a Lie triple system corresponding to $\C \s P^{n_i}$. Moreover, it is clear that $[\g{p}_i,\g{p}_j]=0$ for $i\neq j$. Observe that $\g{p}_i$ is invariant under $\ad_Z$, where $Z\in Z(\g{k})$, and hence every $\g{p}_i$ is invariant under the complex structure of $\s G_k(\C^{n+k})$. Furthermore, it is easy to check that there is some $k\in  \s K$ such that $\Ad(k)$ interchanges the rows of $\g{p}$, see Equation (\ref{eq:cartandec}).
Consequently, $\g{p}_{\Sigma}$ is also a Lie triple system whose associated totally geodesic submanifold is homothetic to $\C\s P^{n_1}\times\cdots\times\C \s P^{n_k}$ of $\s G_k(\C^{n+k})$ and invariant under the complex structure of $\s G_k(\C^{n+k})$. \qedhere
\end{proof}
We are now ready to prove the second main theorem of this article.

\begin{proof}[Proof of Theorem~B]
Using Lemma \ref{lemma:grassman} and Theorem \ref{th:slant},  given any $q\in [0,1]\cap \mathbb{Q}$ and $m\in\mathbb{N}$, there exist $k,n\in \mathbb{N}$ such that there is a totally geodesic submanifold homothetic to $\C\s P^m$ with constant K\"ahler angle $\arccos(q)\in[0,\pi/2]$ in $\s G_k(\C^{n+k})$. This implies our result.
\end{proof}

\end{document}